\newtheorem{proposition}{Proposition}
\newtheorem{conjecture}{Conjecture}
\newtheorem{problem}{Problem}
\newtheorem{lemma}{Lemma}
\newtheorem{theorem}{Theorem}
\newtheorem{corollary}{Corollary}
\theoremstyle{definition}
\newtheorem{definition}{Definition}
\newtheorem{example}{Example}
\theoremstyle{remark}
\newtheorem {remark}{Remark}
\DeclareMathOperator{\Aut}{Aut}
\DeclareMathOperator{\Der}{Der}
\DeclareMathOperator{\ad}{ad}
\DeclareMathOperator{\Ker}{Ker}
\def\FF{{\mathbb F}}
\def\QQ{{\mathbb Q}}
\def\AAA{\mathcal{A}}
\def\BBB{\mathcal{B}}
\def\RRR{\mathfrak{R}}
\def\SSS{\mathfrak{S}}
\def\ggg{\mathfrak{g}}
\def\sss{\mathfrak{s}}
\def\fff{\mathfrak{f}}
\def\hhh{\mathfrak{h}}
\def\ppp{\mathfrak{p}}
\def\bbb{\mathfrak{b}}
\def\nnn{\mathfrak{n}}
\def\kkk{\mathfrak{k}}
\def\lll{\mathfrak{l}}
\def\rrr{\mathfrak{r}}
\def\aaa{\mathfrak{a}}
\def\gl{\mathfrak{gl}}
\def\sl{\mathfrak{sl}}
\def\so{\mathfrak{so}}
\def\sp{\mathfrak{sp}}
\renewcommand{\phi}{\varphi}
\renewcommand{\ge}{\geqslant}
\renewcommand{\le}{\leqslant}
\begin{document}
\date{}
\title[Uniqueness of addition in Lie algebras]{Uniqueness of addition in Lie algebras revisited}
\author{Ivan Arzhantsev}
\address{Faculty of Computer Science, HSE University, Pokrovsky Boulevard 11, Moscow, 109028 Russia}
\email{arjantsev@hse.ru}
\thanks{The research was done within the framework of the HSE Fundamental Research Program}
\subjclass[2020]{Primary 17B05, 17B30; \ Secondary 16B99, 17B20}
\keywords{Unique addition ring, Lie ring, centralizer, semisimple Lie algebra, parabolic subalgebra, seaweed subalgebra, radical, Mal'cev splitting} 

\maketitle
\begin{abstract}
We obtain new and improve old results on uniqueness of addition in Lie rings and Lie algebras. A Lie ring $\RRR$ is called a unique addition ring, or a UA-Lie ring, if any commutator-preserving bijection from $\RRR$ to an arbitrary Lie ring is additive. We describe wide classes of Lie rings that are not UA-Lie ring. In the other direction, it is known that if a finite-dimensional Lie algebra $\ggg$ contains two elements whose centralizers have trivial intersection, then $\ggg$ is a UA-Lie ring. We use this result to characterize UA-Lie rings among seaweed Lie algebras. The paper includes many open problems and questions. 
\end{abstract} 


\section{Introduction}
\label{sec1}

Let us consider two algebraic structures $\AAA$ and $\BBB$ of the same type. A homomorphism between $\AAA$ and $\BBB$ is a map $\alpha\colon\AAA\to\BBB$ that preserves all operations of the corresponding structure. It is useful to know that if $\alpha$ preserves some of the operations, then all other operations are preserved automatically. And it does happen sometimes. 

We consider the case when the algebraic structure is a ring structure and the map $\alpha$ is bijective. There is no chance that any bijection that preserves addition will automatically preserve multiplication. But there is a hope that for some rings $\RRR$ all bijections from $\RRR$ to an arbitrary ring $\SSS$ preserving multiplication are additive maps. In this case one calls $\RRR$ a unique addition ring, or a UA-ring for short. The term is explained by the equivalent reformulation of the property of unique addition: for the operation of multiplication $(\RRR,\cdot)$  there is a unique binary operation $(\RRR,+)$ such that $(\RRR,+,\cdot)$ is a ring.

The study of associative rings with unique addition is a well-developed area of ring theory with many meaningful results; see e.g. \cite{Jo,Mar,Mi,Ri,St}. In Section~\ref{rua}, we recall a proof of one of the main results in this field. Namely, the theorem by Rickart and Stephenson states that for any associative ring $A$ with unit and any $n\ge 2$, the ring of matrices $\text{Mat}_{n\times n}(A)$  is a UA-ring.

The main object of study in this paper are Lie rings and Lie algebras. In Section~\ref{lua} we give a definition of a Lie ring with a unique addition, or a UA-Lie ring. Also we discuss a weaker form of this notion that sometimes appears in the literature and causes certain misunderstandings. 

In 1992, Konstantin Beidar asked the author whether the Lie algebra $\mathfrak{sl_n}(K)$ is a UA-Lie ring.  Some notions introduced below grew out of trying to solve this problem.

First results on the uniqueness of addition in Lie rings appeared more than 20 years ago. They have been published in several papers, some of which are now difficult to access. The results were not formulated in full generality, and recently there has been a desire to review and systematize the results obtained at that time, to find possible generalizations, as well as to give brief and uniform proofs. This desire is realized in this work. In addition, we obtain several new results. Besides the study of the uniqueness of addition by itself, this topic is a good motivation to develop the structural theory of Lie rings and Lie algebras over various fields.

In Section~\ref{nr} we collect results showing that certain Lie rings are not UA-Lie rings. Proposition~\ref{neg} claims that if a Lie ring $\RRR$ is not perfect, has a non-trivial center and is big enough, then $\RRR$ is not a UA-Lie ring. 

Starting from Section~\ref{cc} we come to positive results. The crucial point is that the class of UA-Lie rings is wide -- many important Lie algebras turn out to be Lie rings with unique addition, so this property is not exotic. 

Let us say that a Lie ring $\RRR$ satisfies the C-condition if there are two elements in $\RRR$ whose centralizers have trivial intersection. In Theorem~\ref{TCC} following~\cite{A4} we prove
that any finite-dimensional Lie algebra over an infinite field satisfying the C-condition is a UA-Lie ring. To be honest, to check the C-condition is the only known way to prove that a Lie algebra is a UA-Lie ring. At the same time,  in Section~\ref{sla}  we explain that the class of Lie algebras with the C-condition includes all semisimple Lie algebras and all their parabolic subalgebras (Proposition~\ref{PCC}). Moreover, the concept of Mal'tsev splitting (Section~\ref{ms}) allows to impose certain admissibility conditions on the radical of a finite dimensional Lie algebra under which the C-condition is met (Theorem~\ref{tar}). 

In Section~\ref{sw} we consider a popular class of regular subalgebras of semisimple Lie algebras called seaweed subalgebras. In Proposition~\ref{psw} we give an effective necessary and sufficient condition for a seaweed Lie algebra to be a UA-Lie ring. The results of this section did not apper in earlier publications. 

The aim of Section~\ref{uala} is to discuss a version of the unique addition property which takes into account the specifics of Lie algebras. We say that a Lie algebra $\ggg$ over a field $K$  
is a UA-Lie algebra if any bijection $\alpha\colon\ggg\to\sss$ to a Lie algebra $\sss$ over the same field $K$ is a semilinear map. 

In the last short section we consider commutator-preserving injections between Lie rings. Currently, we have no positive results in this direction, but we expect they will appear soon. In particular, we conjecture that the rigidity of the commutator structure of a semisimple Lie algebra will allow to prove that every injective commutator-preserving map from such an algebra is additive.

The text includes a number of open problems. They are designed to show what kind of questions in this area can be worked on now. The main problem -- find necessary and sufficient conditions for a finite-dimensional Lie algebra $\ggg$ to be a UA-Lie ring -- is still very far from being solved, and this area contains many interesting and exciting tasks.

\smallskip

The author is grateful to the organizers of the European Non-Associative Algebra Seminar for invitation to give a talk on May 15, 2023. The talk served as an incentive to return to the subject and to write this paper. 
Special thanks are due to the referee for careful reading the text and valuable comments.

\section{Associative rings with unique addition}
\label{rua}

We begin with the case of associative rings. 

\begin{definition}
\label{dd}
An associative ring $(R,+,\cdot)$ is called a {\it unique addition ring}, or a {\it UA-ring} for short, if for any ring $(S,\oplus,*)$ any isomorphism $\alpha\colon R\to S$ of the multiplicative semigroups $(R,\cdot)$ and $(S,*)$ is an isomorphism of rings, i.e. the condition $\alpha(a\cdot b)=\alpha(a)*\alpha(b)$ for all $a,b\in R$ implies $\alpha(a+b)=\alpha(a)\oplus\alpha(b)$ for all $a,b\in R$. 
\end{definition} 
The semigroups $(R,\cdot)$ and $(S,*)$ contain unique zeroes, so $\alpha(0_R)=0_S$. If $R$ is a ring with unit, then the semigroups $(R,\cdot)$ and $(S,*)$ contain unique units, and again $\alpha(1_R)=1_S$. 

\smallskip

Equivalently, an associative ring $(R,+,\cdot)$ is a UA-ring, if for the semigroup $(R,\cdot)$ there exists a unique binary operation $+$ on $R$ making $(R,+,\cdot)$ into a ring. Indeed, given two additions $+$ and $\oplus$ on $R$, it follows from Definition~\ref{dd} applied to the identity map from $(R,+,\cdot)$ to $(R,\oplus,\cdot)$ that $a+b=a\oplus b$ for all $a,b \in R$. Conversely, every isomorphism of semigroups $\alpha\colon R\to S$ defines a new addition on $R$, namely $a\boxplus b:=\alpha^{-1}(\alpha(a)\oplus\alpha(b)).$

\smallskip

Let us test this definition in the case of finite fields. It is straightforward to check that for $q\le 4$ the field $\FF_q$ is a UA-ring. We show that for $q>4$ it is not the case.

Let $q=p^n$. We have $\Aut(\mathbb{F}_q)=\langle \psi\rangle$, where $\psi$ is the Frobenius automorphism, and $|\Aut(\mathbb{F}_q)|=n$. At the same time, the number of automorphisms of the semigroup $(\mathbb{F}_q, \cdot)$ is the Euler function $\varphi(q-1)$. For $q>4$ we have $n+1$ numbers $1,p,p^2,\ldots,p^{n-1},p^n-2$
that are smaller than $q-1$ and coprime with $q-1$. So $\varphi(q-1)>n$ and we can find a non-additive automorphism of the semigroup $(\mathbb{F}_q, \cdot)$.

\smallskip 

We do not try to overview the theory of unique addition associative rings, but just give one important result that may be considered as the starting point of the story we are going to tell. 

\begin{theorem} [Rickart~\cite{Ri}, Stephenson~\cite{St}] 
\label{RS}
Let $A$ be an associative ring with unit and~${n\ge 2}$. Then the matrix ring $\text{Mat}_{n\times n}(A)$ is a UA-ring.
\end{theorem}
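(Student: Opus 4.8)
The plan is to reconstruct the addition of $R=\text{Mat}_{n\times n}(A)$ from its multiplicative semigroup by means of the matrix units and the Peirce decomposition they induce. Write $e_{ij}$ for the standard matrix units, so that $e_{ij}e_{kl}=\delta_{jk}e_{il}$ and $\sum_i e_{ii}=1$, and put $R_{ij}=e_{ii}Re_{jj}$, so that every $x\in R$ decomposes uniquely as $x=\sum_{i,j}x_{ij}$ with $x_{ij}=e_{ii}xe_{jj}\in R_{ij}$. Let $\alpha\colon R\to S$ be a semigroup isomorphism $(R,\cdot)\to(S,*)$ and set $g_{ij}=\alpha(e_{ij})$. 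First I would record the free facts: $0_R$ is the unique multiplicatively absorbing element and $1_R$ the unique two-sided identity, so $\alpha(0_R)=0_S$ and $\alpha(1_R)=1_S$; moreover the $g_{ij}$ inherit the relations $g_{ij}*g_{kl}=\delta_{jk}g_{il}$, so the $g_{ii}$ are orthogonal idempotents and $h:=\bigoplus_i g_{ii}$ satisfies $h*h=h$. Throughout I would use freely the distributivity of $*$ over $\oplus$ and additive cancellation in the ring $S$, together with bijectivity of $\alpha$.

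The heart of the argument splits into two claims. Claim~1 (assembly): $\alpha(x)=\bigoplus_{i,j}\alpha(x_{ij})$ for every $x$. To prove it I would choose, by surjectivity, the element $v=\alpha^{-1}\big(\bigoplus_{i,j}\alpha(x_{ij})\big)$ and compute its Peirce components: for all $s,t$,
\[
\alpha(e_{ss}\,v\,e_{tt})=g_{ss}*\alpha(v)*g_{tt}=\bigoplus_{i,j}g_{ss}*\alpha(x_{ij})*g_{tt}=\bigoplus_{i,j}\alpha\big(\delta_{is}\delta_{jt}\,x_{ij}\big)=\alpha(x_{st}),
\]
whence $e_{ss}ve_{tt}=x_{st}$ for all $s,t$, so $v=\sum_{s,t}x_{st}=x$; applying $\alpha$ gives Claim~1. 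Claim~2 (additivity inside one block): for fixed $(i,j)$ and $a,b\in R_{ij}$ one has $\alpha(a+b)=\alpha(a)\oplus\alpha(b)$. For an off-diagonal block $i\ne j$ I would exploit that $R_{ij}^2=0$, so the shears $1+a$ form a multiplicative group with $(1+a)(1+b)=1+a+b$; expanding $\alpha\big((1+a)(1+b)\big)$ via Claim~1 (which yields $\alpha(1+a)=h\oplus\alpha(a)$) and using $h*\alpha(b)=\alpha(b)$, $\alpha(a)*h=\alpha(a)$, $\alpha(a)*\alpha(b)=\alpha(ab)=0_S$, I obtain $h\oplus\alpha(a+b)=h\oplus\alpha(a)\oplus\alpha(b)$, and cancellation finishes the block. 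For a diagonal block $i=j$ I would transport the problem to an off-diagonal one: picking any $k\ne i$ (this is where $n\ge2$ is used), right multiplication by $e_{ik}$ carries $R_{ii}$ onto $R_{ik}$, and comparing $\alpha\big((a+b)e_{ik}\big)$ with $\big(\alpha(a)\oplus\alpha(b)\big)*g_{ik}$ and then right-multiplying by $g_{ki}$ (using $g_{ik}*g_{ki}=g_{ii}$) yields $\alpha(a+b)=\alpha(a)\oplus\alpha(b)$.

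Finally I would assemble the two claims: for arbitrary $x,y$ with components $x_{ij},y_{ij}$,
\[
\alpha(x+y)=\bigoplus_{i,j}\alpha(x_{ij}+y_{ij})=\bigoplus_{i,j}\big(\alpha(x_{ij})\oplus\alpha(y_{ij})\big)=\alpha(x)\oplus\alpha(y),
\]
using Claim~1 for the outer decompositions and Claim~2 termwise. I expect the main obstacle to be conceptual as much as computational: since $S$ is an arbitrary ring carrying no a priori matrix structure, one may not assume that $\alpha$ respects the decomposition $x=\sum x_{ij}$ — that is exactly Claim~1, and it must be extracted from multiplication alone through the preimage $v$ and the idempotents $g_{ii}$. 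The off-diagonal shear computation and the transfer to the diagonal are the two places where the hypothesis $n\ge2$ is genuinely consumed; for $n=1$ no off-diagonal block exists and the statement indeed fails, consistently with the finite-field examples given above.
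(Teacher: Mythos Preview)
Your argument is correct, but it follows a different route from the paper's. The paper reproduces Stephenson's proof, which works with the \emph{one-sided} decomposition $R=RE_{11}\times\cdots\times RE_{nn}$ into left ideals (columns). Step~1 there shows $\alpha(b+c)=\alpha(b)\oplus\alpha(c)$ whenever $b,c$ lie in complementary left-ideal summands, arguing directly from the idempotent decomposition $1=b_1+c_1$; Step~2 then obtains additivity \emph{within} a single summand $C$ by replacing the complement $B$ with the graph $(\mathrm{id}+s)(B)$ of a left-module homomorphism $s\colon B\to C$ and reapplying Step~1. The hypothesis $n\ge 2$ enters because one needs a second isomorphic column to produce such an $s$.

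You instead use the finer \emph{two-sided} Peirce decomposition $R=\bigoplus_{i,j}e_{ii}Re_{jj}$. Your Claim~1 (assembly) is proved by a clean preimage trick rather than by the idempotent computation in the paper; your Claim~2 recovers additivity in an off-diagonal block via the unipotent shears $1+a$ (exploiting $R_{ij}^2=0$), and then transports this to diagonal blocks by multiplying by $e_{ik}$ and $e_{ki}$. Conceptually, your shear $(1+a)(1+b)=1+(a+b)$ plays the same role as Stephenson's $(\mathrm{id}+s)$, and both arguments consume $n\ge 2$ at the same point. Your version is closer in spirit to Rickart's original matrix-unit calculation, while the paper's is the more module-theoretic Stephenson formulation; the latter has the mild advantage of generalising verbatim to any ring with a direct-sum decomposition into pairwise isomorphic left ideals, whereas yours is slightly more explicit and self-contained for the matrix case at hand.
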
  

For convenience of the reader, we provide a short proof of this result following~\cite{St}. 

\smallskip

{\it Step 1.}\ Let $\alpha\colon R\to S$ be an isomorphism of multiplicative semigroups and $R=B\times C$, where $B$ and $C$ are left ideals in $R$. Then $\alpha(b+c)=\alpha(b)\oplus\alpha(c)$ for all $b\in B, c\in C$.

\begin{proof} 
Let $1_R=b_1+c_1$ with $b_1\in B$, $c_1\in C$. We have $b_1=b_1(b_1+c_1)$, so $b_1^2=b_1$ and $b_1c_1=0$. Let $\alpha(b_1)\oplus\alpha(c_1)=\alpha(d)$. We have 
$$\alpha(b_1d)=\alpha(b_1)\alpha(d)=\alpha(b_1)(\alpha(b_1)\oplus\alpha(c_1))=\alpha(b_1^2)\oplus\alpha(b_1c_1)=\alpha(b_1), 
$$
so $b_1d=b_1$, $c_1d=c_1$, and $d=(b_1+c_1)d=b_1+c_1=1_R$. For $x=b+c$ we have $b=xb_1$, $c=xc_1$, and
$$
\alpha(b)\oplus\alpha(c)=\alpha(xb_1)\oplus\alpha(xc_1)=\alpha(x)(\alpha(b_1)\oplus\alpha(c_1))=\alpha(x)=\alpha(b+c).
$$
\end{proof}
 
{\it Step 2.}\ Let $s\colon B\to C$ be an $R$-homomorphism. Then $\alpha(s(b)+c)=\alpha(s(b))\oplus\alpha(c)$ for all $b\in B$, $c\in C$. 

\begin{proof}
We have $B\times C=(\text{id}+s)(B)\times C=R$ and applying Step~1 three times we obtain 
$$
\alpha(b)\oplus\alpha(s(b)+c)=\alpha(b+s(b)+c)=\alpha((\text{id}+s)(b)+c)=
$$
$$
=\alpha((\text{id}+s)(b))\oplus\alpha(c)=\alpha(b)\oplus\alpha(s(b))\oplus\alpha(c),
$$ 
so the claim. 
\end{proof}

{\it Step 3.}\ Denote by $E_{ij}$ the matrix units in $\text{Mat}_{n\times n}(A)$. With $R=\text{Mat}_{n\times n}(A)$ we have $R=RE_{11}\times\ldots\times RE_{nn}$, and the left ideals $RE_{ii}$ are pairwise isomorphic. Applying the results of Step~1 and Step~2 to this decomposition, we complete the proof of Theorem~\ref{RS}. 

\bigskip

Let us consider one more variant of definition of a unique addition ring.

\begin{definition}
\label{dwua}
An associative ring $(R,+,\cdot)$ is called a {\it ring with weak unique addition}, or a {\it wUA-ring}, if any bijection $\alpha\colon R\to R$ preserving multiplication preserves the addition as well. 
\end{definition} 

Clearly, any UA-ring is a wUA-ring. But Definition~\ref{dwua} is weaker than Definition~\ref{dd}. Indeed, take two rings with zero multiplication (or commutative Lie rings): 
$$
\mathfrak{R}_1=(\mathbb{Z}/2\mathbb{Z}\oplus\mathbb{Z}/2\mathbb{Z},+)\  \ \text{and} \ \ \mathfrak{R}_2=(\mathbb{Z}/4\mathbb{Z},+).
$$ 
Then any bijection on $\mathfrak{R_1}$ that preserves $0_1$ is additive, but any bijection from $\mathfrak{R_1}$ to $\mathfrak{R_2}$ that sends $0_1$ to $0_2$ is not additive. 
So $\mathfrak{R_1}$ is a wUA-ring, but not a UA-ring. 

\smallskip

It seems that Definition~\ref{dd} is more natural and common in the literature. 

\section{Lie rings with unique addition}
\label{lua}

Now we come to uniqueness of addition in Lie rings.

\begin{definition}
\label{deflie}
A Lie ring $(\RRR,+,[,])$ is called a {\it unique addition Lie ring}, or a {\it UA-Lie ring} for short, if any Lie ring $(\SSS,\oplus,\lceil,\rceil)$ any bijection $\alpha\colon \RRR\to\SSS$ preserving commutators is an isomorphism of Lie rings, i.e. the condition $\alpha([a,b])=\lceil\alpha(a),\alpha(b)\rceil$ for all $a,b\in\RRR$ implies $\alpha(a+b)=\alpha(a)\oplus\alpha(b)$ for all $a,b\in\RRR$. 
\end{definition} 
\smallskip
We have $\alpha(0)=\alpha([0,0])=\lceil\alpha(0),\alpha(0)\rceil=0.$ Clearly, bijections as in Definition~\ref{deflie} preserve any property of a Lie ring that can be formulated in terms of commutators only. For example, commutative, nilpotent, or solvable Lie rings go to Lie rings of the same type.  
\begin{remark}
If $\RRR=\RRR_1\oplus\RRR_2$ is a direct sum of Lie rings and $\RRR$ is a UA-Lie ring, it is easy to check that $\RRR_1$ and $\RRR_2$ are UA-Lie rings as well.
\end{remark}

Here we also have a weaker version of the unique addition property. 

\begin{definition}
\label{dada}
A Lie ring $\RRR$ is a {\it Lie ring with weak unique addition}, or a {\it wUA-Lie ring}, if any bijection $\alpha\colon\RRR\to\RRR$ preserving multiplication preserves the addition as well. 
\end{definition} 

Some results in~\cite{A1}-\cite{A4}, \cite{AT} and~\cite{Pon2} are obtained for this weaker property, and one of the aim of this paper is to return to more natural Definition~\ref{deflie}.

\section{Negative results} 
\label{nr}

In this section we collect and unify examples of Lie rings that are not UA-Lie rings given in~\cite{A1,A2,A4,AT}. In order to show that a Lie ring $\RRR$ is not a UA-Lie ring it suffices to construct a non-additive bijection $\alpha\colon\RRR\to\RRR$ that preserves commutators. 

\begin{lemma}
\label{lemnon}
Let $(A,+)$ be a commutative group with $|A|>4$ and $a\ne b\in A\setminus\{0\}$. Then the bijection $\alpha\colon A\to A$, $\alpha(a)=b$, $\alpha(b)=a$ and $\alpha(x)=x$ for all $x\ne a,b$ is not additive.
\end{lemma}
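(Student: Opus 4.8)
The plan is to argue by contradiction, exploiting the fact that the fixed-point set of a group automorphism is a subgroup, together with a coset-counting argument.

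First I would record the basic structure of $\alpha$. Since $a,b\neq 0$ we have $\alpha(0)=0$, and $\alpha$ is the transposition of $A$ that swaps $a$ and $b$ while fixing every other element. Suppose, for contradiction, that $\alpha$ is additive. Being an additive bijection of an abelian group, $\alpha$ is then an automorphism of $(A,+)$.

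Next I would consider the fixed-point set $F:=\{x\in A:\alpha(x)=x\}$. This is the kernel of the endomorphism $\alpha-\mathrm{id}$ of $(A,+)$, hence a subgroup of $A$. On the other hand, by the explicit description of $\alpha$, its fixed points are exactly $A\setminus\{a,b\}$; thus $F=A\setminus\{a,b\}$ is a subgroup whose complement consists of exactly two elements. In particular $F$ is proper (as $a\notin F$), so its index $n:=[A:F]$ satisfies $n\ge 2$.

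Finally I would run the coset count. The complement $A\setminus F$ is the disjoint union of the $n-1$ nontrivial cosets of $F$, each of cardinality $|F|$, so $|A\setminus F|=(n-1)\,|F|=2$. Since $n-1\ge 1$, this forces $|F|\le 2$, and then $|A|=n\,|F|\le 4$, contradicting $|A|>4$. The same identity also disposes of the case when $A$ is infinite, for then $|F|$ would be infinite and $(n-1)|F|$ could not equal $2$. The only point requiring a little care — and the reason I prefer the coset argument to a direct manipulation of the relations among $a$, $b$ and $a+b$ — is to keep the counting uniform across the finite and infinite cases; phrasing it through the cosets of $F$ achieves exactly this and avoids any case-by-case checking of additivity.
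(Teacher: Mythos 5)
Your proof is correct, but it takes a genuinely different route from the paper's. The paper argues directly: since $|A|>4$ one can pick $c\in A$ with $c\ne 0,a,b,b-a$; then $\alpha(c)=c$ and $\alpha(a+c)=a+c$ (the exclusions $c\ne 0$ and $c\ne b-a$ guarantee $a+c\notin\{a,b\}$), while $\alpha(a)+\alpha(c)=b+c\ne a+c$, so additivity fails at the single pair $(a,c)$. You instead assume additivity, observe that the fixed-point set $F=A\setminus\{a,b\}$ of the resulting automorphism is a subgroup (kernel of the endomorphism $\alpha-\mathrm{id}$, using commutativity of $A$), and derive $(n-1)|F|=2$ for $n=[A:F]\ge 2$, forcing $|A|\le 4$; the counting also rules out infinite $A$ cleanly. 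Both arguments are complete. The paper's is shorter and needs nothing beyond the existence of one suitable element; yours is more structural and makes transparent why $4$ is the exact threshold --- the complement of a proper subgroup of an abelian group can have exactly two elements only when $|A|\in\{3,4\}$, which matches the fact that the lemma genuinely fails for $\ZZ/2\ZZ\oplus\ZZ/2\ZZ$ and similar small groups.
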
 

\begin{proof} 
Take $c\in A$, $c\ne 0,a,b,b-a$. Then $a+c=\alpha(a+c)\ne\alpha(a)+\alpha(c)=b+c$.
\end{proof}

Denote by $[\RRR,\RRR]$ the commutant of a Lie ring $\RRR$ and let
$$
Z(\RRR)=\{a\in\RRR \ | \ [a,b]=0 \ \forall b\in\RRR\}
$$
be the center of $\RRR$. 

\begin{proposition}
\label{neg}
Let $\RRR$ be a Lie ring with $|\RRR|>4$. Assume that $[\RRR,\RRR]\ne\RRR$, $Z(\RRR)\ne\{0\}$ and the condition $Z(\RRR)\cap [\RRR,\RRR]=\{0\}$ implies $|Z(\RRR)|>2$. 
Then $\RRR$ is not a UA-Lie ring. 
\end{proposition}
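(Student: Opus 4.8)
The plan is to construct an explicit non-additive commutator-preserving bijection $\alpha\colon\RRR\to\RRR$, exploiting the hypotheses to guarantee we have enough room to swap two elements (or translate along the center) without disturbing any commutator. The guiding idea is that since $[\RRR,\RRR]\ne\RRR$ and $Z(\RRR)\ne\{0\}$, the commutator structure ``does not see'' certain directions, and we can modify $\alpha$ along those invisible directions.

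First I would try the simplest modification: pick a nonzero central element $z\in Z(\RRR)$ and a nonzero element $w\notin [\RRR,\RRR]$, and look for a bijection that transposes two suitably chosen elements while fixing everything else, in the spirit of Lemma~\ref{lemnon}. Concretely, commutators only involve differences that land in $[\RRR,\RRR]$, and central elements contribute nothing to any bracket, so if I swap $x\mapsto x+z$ on a coset of $Z(\RRR)$ (or transpose two elements differing by a central element), the bracket $[\alpha(a),\alpha(b)]=[a,b]$ is automatically preserved because adding central elements does not change brackets. The task is then purely to arrange that such a swap is (i) a bijection and (ii) \emph{not} additive, and here Lemma~\ref{lemnon} is the right tool once I realize the group $(\RRR,+)$ is large enough. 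The hypothesis $|\RRR|>4$ feeds directly into Lemma~\ref{lemnon}.

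The subtlety — and the reason for the curious conditional hypothesis ``$Z(\RRR)\cap[\RRR,\RRR]=\{0\}$ implies $|Z(\RRR)|>2$'' — is that I must ensure two \emph{distinct} elements to transpose actually exist inside whichever invisible subgroup I choose to perturb, and that the transposition remains commutator-preserving. I expect the argument to split into cases according to whether $Z(\RRR)$ meets $[\RRR,\RRR]$ nontrivially. If $Z(\RRR)\cap[\RRR,\RRR]\ne\{0\}$, I can pick a nonzero element $z$ lying in both; then translating by $z$ keeps me inside $[\RRR,\RRR]$ where brackets already vanish for the relevant pairs, giving flexibility. If instead $Z(\RRR)\cap[\RRR,\RRR]=\{0\}$, the center sits transversally, and I need the extra size $|Z(\RRR)|>2$ to have room to produce a nontrivial non-additive swap purely within the center — a center of order $2$ would be too rigid (mirroring the $\RRR_1$ versus $\RRR_2$ obstruction discussed after Definition~\ref{dwua}).

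I anticipate the \textbf{main obstacle} to be verifying commutator-preservation and non-additivity \emph{simultaneously} in the transversal case $Z(\RRR)\cap[\RRR,\RRR]=\{0\}$. Preservation of brackets is nearly free whenever the perturbation moves elements by central amounts, but guaranteeing the map is genuinely non-additive requires the perturbation to interact nontrivially with the additive structure, and the two demands pull in opposite directions. The conditional size hypothesis is precisely what reconciles them, so the heart of the proof will be a careful case analysis choosing the right pair of elements to transpose and checking both properties at once, invoking Lemma~\ref{lemnon} to certify non-additivity in each case.
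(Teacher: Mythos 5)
Your plan is correct and follows essentially the same route as the paper: a transposition of two elements that differ by a central element and lie outside $[\RRR,\RRR]$ (so all commutator values are fixed and all brackets of images are unchanged), with the case split on whether $Z(\RRR)\cap[\RRR,\RRR]$ is trivial, the hypothesis $|Z(\RRR)|>2$ supplying two distinct nonzero central elements in the transversal case, and Lemma~\ref{lemnon} certifying non-additivity via $|\RRR|>4$. The paper's proof is exactly this, executed in three cases (commutative; $Z\cap[\RRR,\RRR]=\{0\}$; $Z\cap[\RRR,\RRR]\ne\{0\}$).
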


\begin{proof} 
{\it Case 1}.\ Let $\RRR$ be commutative. Then we take a bijection from Lemma~\ref{lemnon}. 

\smallskip

{\it Case 2}.\ Let $\RRR$ be not commutative and $Z(\RRR)\cap [\RRR,\RRR]=\{0\}$. Take $0\ne z_1,z_2\in Z(\RRR)$, $z_1\ne z_2$, and $a\in [\RRR,\RRR]\setminus\{0\}$. 
We let 
$$
\alpha(a+z_1)=a+z_2, \ \  \alpha(a+z_2)=a+z_1, \ \ \text{and} \ \ \alpha(x)=x \quad \forall x\ne a+z_1, a+z-2. 
$$
Since $a+z_1,a+z_2\notin[\RRR,\RRR]$, $[x,a+z_1]=[x,a+z_2]$ for all $x\in\RRR$ and $[a+z_1,a+z_2]=0$, the bijection $\alpha$ preserves commutators. On the other hand, it is not additive by Lemma~\ref{lemnon}.

\smallskip

{\it Case 3}.\ Let $Z(\RRR)\cap [\RRR,\RRR]\ne\{0\}$. Then we take $0\ne z\in Z(\RRR)\cap [\RRR,\RRR]$, $a\notin [\RRR,\RRR]$, and let
$$
\alpha(a)=a+z, \ \  \alpha(a+z)=a, \ \ \text{and} \ \ \alpha(x)=x \quad \forall x\ne a, a+z. 
$$
Again we have $a,a+z\notin[\RRR,\RRR]$, $[x,a]=[x,a+z]$ for all $x\in\RRR$ and $[a,a+z]=0$, so the bijection $\alpha$ preserves commutators. It is not additive by Lemma~\ref{lemnon}.
\end{proof} 

\begin{corollary}
Let $\RRR$ be a nilpotent Lie ring with $|\RRR|>4$. Then $\RRR$ is not a UA-Lie ring.
\end{corollary}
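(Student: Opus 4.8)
The plan is to reduce the corollary to Proposition~\ref{neg} by verifying that any nilpotent Lie ring $\RRR$ with $|\RRR|>4$ satisfies the three hypotheses of that proposition. Recall that nilpotency means the lower central series $\RRR\supseteq[\RRR,\RRR]\supseteq[[\RRR,\RRR],\RRR]\supseteq\cdots$ terminates at $\{0\}$ after finitely many steps. First I would dispose of the trivial case where $\RRR$ is commutative (i.e. $[\RRR,\RRR]=\{0\}$), which falls directly under Case~1 of Proposition~\ref{neg} via Lemma~\ref{lemnon}; so I may assume $[\RRR,\RRR]\ne\{0\}$.

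The key structural input is that a nonzero nilpotent Lie ring has both a proper commutant and a nontrivial center. For the first, if we had $[\RRR,\RRR]=\RRR$, then the lower central series would be constant and never reach $\{0\}$, contradicting nilpotency; hence $[\RRR,\RRR]\ne\RRR$, giving the first hypothesis. For the second, let $c$ be the nilpotency class, so that the last nonzero term $\RRR^{(c)}$ of the lower central series satisfies $[\RRR^{(c)},\RRR]=\{0\}$; thus $\RRR^{(c)}$ is a nonzero subset of $Z(\RRR)$, and $Z(\RRR)\ne\{0\}$. This establishes the second hypothesis.

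It remains to check the conditional third hypothesis: if $Z(\RRR)\cap[\RRR,\RRR]=\{0\}$, then $|Z(\RRR)|>2$. Here I would argue that under nilpotency the center always meets the commutant nontrivially whenever the commutant is nonzero, so the hypothesis $Z(\RRR)\cap[\RRR,\RRR]=\{0\}$ can only occur when $[\RRR,\RRR]=\{0\}$, i.e.\ in the commutative case already handled. Indeed, take the last nonzero term $\RRR^{(c)}$ of the lower central series with $c\ge 1$; it lies in $[\RRR,\RRR]$ (since $\RRR^{(c)}\subseteq\RRR^{(1)}=[\RRR,\RRR]$) and, as noted, it lies in $Z(\RRR)$. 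So $\{0\}\ne\RRR^{(c)}\subseteq Z(\RRR)\cap[\RRR,\RRR]$, forcing us into Case~3 of Proposition~\ref{neg}, where the third hypothesis is vacuous. Thus in every non-commutative case the intersection is automatically nonzero, and the conditional clause is satisfied trivially.

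The main obstacle, such as it is, lies in the bookkeeping of the third hypothesis: one must be careful that Proposition~\ref{neg} is stated as a disjunction over the value of $Z(\RRR)\cap[\RRR,\RRR]$, and the conditional size requirement on $Z(\RRR)$ is only needed in the branch where that intersection is trivial. The observation that nilpotency forces $\RRR^{(c)}\subseteq Z(\RRR)\cap[\RRR,\RRR]$ to be nonzero whenever $\RRR$ is non-commutative shows that branch never arises outside the commutative case, so the size condition on $Z(\RRR)$ is never actually invoked. With all three hypotheses verified, Proposition~\ref{neg} applies directly and yields that $\RRR$ is not a UA-Lie ring, completing the proof.
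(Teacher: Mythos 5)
Your proof is correct and is exactly the intended reduction: the paper states this as an immediate corollary of Proposition~\ref{neg}, relying on the standard facts that a nonzero nilpotent Lie ring has $[\RRR,\RRR]\ne\RRR$ and $Z(\RRR)\ne\{0\}$, and that in the non-commutative case the last nonzero term of the lower central series lies in $Z(\RRR)\cap[\RRR,\RRR]$, so the conditional size hypothesis is either vacuous or (in the commutative case, where $Z(\RRR)=\RRR$) follows from $|\RRR|>4$. Nothing is missing.
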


\begin{corollary}
Let $K$ be a field with at least three elements. Then the Lie algebra $\gl_n(K)$ is not a UA-Lie ring for any $n\ge 2$. 
\end{corollary}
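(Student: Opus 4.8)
The plan is to deduce the statement directly from Proposition~\ref{neg} by verifying its three hypotheses for the Lie ring $\RRR=\gl_n(K)$.

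First I would pin down the commutant and the center. For the commutant, every commutator has vanishing trace, so $[\gl_n(K),\gl_n(K)]\subseteq\sl_n(K)$; the reverse inclusion comes from the standard identities $E_{ij}=[E_{ii},E_{ij}]$ for $i\ne j$ and $E_{ii}-E_{jj}=[E_{ij},E_{ji}]$, which realize a spanning set of $\sl_n(K)$ as commutators. Hence $[\gl_n(K),\gl_n(K)]=\sl_n(K)$, a proper ideal, which gives the hypothesis $[\RRR,\RRR]\ne\RRR$. For the center, a matrix commuting with all the $E_{ij}$ must be scalar, so $Z(\gl_n(K))=K\mal I$, which is nontrivial with at least three elements; this supplies $Z(\RRR)\ne\{0\}$.

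Next, the cardinality hypothesis $|\RRR|>4$ is immediate: $\gl_n(K)$ has $|K|^{n^2}$ elements, and since $|K|\ge 3$ and $n^2\ge 4$ this is at least $81$ (and infinite when $K$ is infinite).

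The only point that requires care -- and the genuine case distinction -- is the last hypothesis, concerning the intersection $Z(\gl_n(K))\cap[\gl_n(K),\gl_n(K)]=(K\mal I)\cap\sl_n(K)$. A scalar matrix $\lambda I$ lies in $\sl_n(K)$ exactly when $n\lambda=0$, so I would split according to whether $n$ vanishes in $K$. If $n\ne 0$ in $K$, the intersection is $\{0\}$, but then the required implication holds because $|Z(\gl_n(K))|=|K|\ge 3>2$. If $n=0$ in $K$ (that is, $\operatorname{char} K\mid n$), every scalar matrix is traceless, so the intersection equals $Z(\gl_n(K))\ne\{0\}$; the premise of the implication then fails and it holds vacuously. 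In either case all assumptions of Proposition~\ref{neg} are met, and the conclusion that $\gl_n(K)$ is not a UA-Lie ring follows at once. I expect this characteristic-divides-$n$ dichotomy to be the main (and essentially the only) obstacle, everything else being a direct computation.
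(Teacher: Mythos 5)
Your proof is correct and follows exactly the route the paper intends: the corollary is stated as an immediate consequence of Proposition~\ref{neg}, and you verify all its hypotheses for $\gl_n(K)$. One small simplification: the characteristic dichotomy is not actually needed, since $|Z(\gl_n(K))|=|K|\ge 3>2$ holds unconditionally, so the implication in the last hypothesis is satisfied regardless of whether $\operatorname{char}K$ divides $n$.
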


\begin{remark}
Let $K$ be the field $\FF_2$. For $n=2$ we have $E=[E_{12},{E_{21}}]$. So for $n=2k$ Proposititon~\ref{neg} implies that $\gl_n(\FF_2)$ is not a UA-Lie ring. 
\end{remark}

\begin{problem}
Is the Lie algebra $\gl_{2k+1}(\FF_2)$, $k\ge 1$, a UA-Lie ring?
\end{problem}

In Section~\ref{ms} we will see many examples of solvable Lie rings that are UA-Lie rings. So the condition $[\RRR,\RRR]=\RRR$ is not necessary for a Lie ring to be a UA-Lie ring. 

\begin{problem}
Does there exist a Lie ring with a trivial center that is not a UA-Lie ring?
\end{problem} 

\begin{example}
\label{exy}
Let $K$ be a field of characteristic different from $2$ and $\ggg$ be the 6-dimensional Lie algebra over $K$ defined as the semidirect product
$\sl_2(K)\rightthreetimes\langle x,y,z\rangle$, where 
$$
[x,y]=z, \quad [x,z]=[y,z]=0, 
$$
and the homomorphism $\sl_2(K)\to\Der(\langle x,y,z\rangle)$ 
is given by the tautologial $\sl_2$-representation on $\langle x,y\rangle$ and the trivial $\sl_2$-representation on $\langle z\rangle$. We have
$[\ggg,\ggg]=\ggg$ and $Z(\ggg)=\langle z\rangle$. 
\end{example} 

\begin{problem}
Is the Lie algebra $\ggg$ from Example~\ref{exy} a UA-Lie ring? 
\end{problem}

\section{The C-condition}
\label{cc}

In this section we introduce the main technical tool to prove that a Lie algebra $\ggg$ is a UA-Lie ring. 
Let us consider the centralizer of an element in a Lie ring: 
$$
C(a)=\{b\in\RRR \, | \, [a,b]=0\}. 
$$
The following key lemma was found by Alexander Titov in his school years, see~\cite[Lemma~2]{AT}. 

\begin{lemma} 
\label{TL}
Let $\alpha\colon\RRR\to\SSS$ be a commutator-preserving bijection of Lie rings. If  
for some $a,b\in\RRR$ we have $C(a)\cap C(b)=\{0\}$, then $\alpha(a+b)=\alpha(a)\oplus\alpha(b)$. 
\end{lemma}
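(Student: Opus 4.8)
The plan is to transport the candidate sum $\alpha(a)\oplus\alpha(b)$ back to $\RRR$ and to show that its preimage is forced to equal $a+b$ by the centralizer hypothesis. Concretely, I would set $c=\alpha(a)\oplus\alpha(b)\in\SSS$ and let $d=\alpha^{-1}(c)\in\RRR$. Since $\alpha$ is a bijection, proving $\alpha(a+b)=c$ is equivalent to proving $d=a+b$, so the whole problem reduces to identifying the element $d$.

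First I would read off the commutators of $d$ from those of $a$ and $b$. For every $v\in\RRR$, commutator preservation together with bilinearity of the bracket in $\SSS$ gives
$$
\alpha([d,v])=\lceil\alpha(d),\alpha(v)\rceil=\lceil c,\alpha(v)\rceil=\lceil\alpha(a),\alpha(v)\rceil\oplus\lceil\alpha(b),\alpha(v)\rceil=\alpha([a,v])\oplus\alpha([b,v]).
$$
The crucial move is to test this identity against $v=a$ and $v=b$ themselves. Taking $v=a$ and using $\alpha([a,a])=\alpha(0)=0$ yields $\alpha([d,a])=\alpha([b,a])$, and injectivity of $\alpha$ gives $[d,a]=[b,a]$; symmetrically $[d,b]=[a,b]$. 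These are now honest equalities inside $\RRR$, where addition and subtraction are available.

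Finally I would invoke the C-condition. Putting $u=d-a-b$ and expanding the bracket in $\RRR$, the substitution $[a,d]=-[d,a]=-[b,a]=[a,b]$ gives $[a,u]=[a,d]-[a,b]=0$, so $u\in C(a)$; the analogous computation with $[b,d]=[b,a]$ gives $u\in C(b)$. Hence $u\in C(a)\cap C(b)=\{0\}$, so $u=0$ and $d=a+b$, which is exactly the desired conclusion.

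I expect the argument to be short, so the main point is conceptual rather than computational: the insight that the two partial identities $[d,a]=[b,a]$ and $[d,b]=[a,b]$ combine to place $d-a-b$ in $C(a)\cap C(b)$, after which the hypothesis does all the remaining work. The only real obstacle is bookkeeping, namely keeping straight which manipulations take place in $\SSS$ (where additivity of $\alpha$ is not yet known, and only the bracket $\lceil\,,\rceil$ and $\oplus$ may be used) and which take place in $\RRR$ (where the full Lie-ring structure, and in particular the subtraction defining $u$, is legitimate).
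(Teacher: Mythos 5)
Your proof is correct and rests on the same mechanism as the paper's: a discrepancy element is shown to lie in $C(a)\cap C(b)$ by bracketing against $a$ and $b$ and using biadditivity of the bracket in $\SSS$ together with injectivity of $\alpha$. The only cosmetic difference is that you measure the defect in $\RRR$ (as $u=\alpha^{-1}(\alpha(a)\oplus\alpha(b))-a-b$), whereas the paper measures it in $\SSS$ and pulls it back (writing $\alpha(a+b)=\alpha(a)\oplus\alpha(b)\oplus\alpha(c)$); both are valid and essentially equivalent.
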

\begin{proof}
Let $\alpha(a+b)=\alpha(a)\oplus\alpha(b)\oplus\alpha(c)$ for some $c\in\mathfrak{R}$. We have 
$$
\lceil\alpha(a),\alpha(b)\rceil=\alpha([a,b])=\alpha([a+b,b])=\lceil\alpha(a+b),\alpha(b)\rceil=
$$
$$
=\lceil\alpha(a)\oplus\alpha(b)\oplus\alpha(c),\alpha(b)\rceil=\lceil\alpha(a),\alpha(b)\rceil\oplus\lceil\alpha(c),\alpha(b)\rceil.
$$
Hence $0=\lceil\alpha(c),\alpha(b)\rceil=\alpha([c,b])$, and $c\in C(b)$. By the same arguments, we have $c\in C(a)$, and so $c=0$. 
\end{proof} 

\begin{definition}
A Lie ring $\mathfrak{R}$ satisfies the {\it C-condition} if there are elements $a,b\in \mathfrak{R}$ with $C(a)\cap C(b)=\{0\}$. 
\end{definition}

\begin{remark}
Any Lie ring with C-condition has trivial center. Conversely, any 2-generated Lie ring (Lie algebra) with trivial center satisfies the C-condition. In particular, every semisimple Lie algebra $\ggg$ over a field of characteristic zero has trivial center and is generated by two elements~\cite[Theorem~6]{Ku}, so $\ggg$ satisfies the C-condition. 
\end{remark}

The following theorem is obtained in \cite[Theorem~3.2]{A4}. Below we give a proof of this results with some improvements and corrections. 

\begin{theorem}
\label{TCC}
Any finite-dimensional Lie algebra over an infinite field satisfying the C-condition is a UA-Lie ring.
\end{theorem}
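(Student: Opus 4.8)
```latex
\textbf{Proof proposal.}\ The plan is to leverage Lemma~\ref{TL}
together with the C-condition to establish additivity on a
Zariski-dense subset, and then to upgrade dense additivity to full
additivity using the linear structure over an infinite field. Fix a
commutator-preserving bijection $\alpha\colon\ggg\to\sss$ and fix two
elements $a_0,b_0\in\ggg$ with $C(a_0)\cap C(b_0)=\{0\}$, which exist by
the C-condition.

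First I would note that the single pair $(a_0,b_0)$ is not enough:
Lemma~\ref{TL} only yields $\alpha(a_0+b_0)=\alpha(a_0)\oplus\alpha(b_0)$.
The key idea is that the C-condition is an \emph{open} condition. For a
fixed element $c\in\ggg$, consider the pair $(a_0+tc,\,b_0+sc)$ or, more
flexibly, pairs $(x,y)$ ranging over $\ggg\times\ggg$. The set of pairs
$(x,y)$ with $C(x)\cap C(y)=\{0\}$ is defined by the non-vanishing of
suitable minors of the map $z\mapsto([x,z],[y,z])$, hence is a Zariski-open
subset $U$ of $\ggg\times\ggg$; it is nonempty since it contains
$(a_0,b_0)$, so over an infinite field it is dense and, crucially, its
projections and translates meet any given affine line in all but finitely
many points. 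For every $(x,y)\in U$ Lemma~\ref{TL} gives
$\alpha(x+y)=\alpha(x)\oplus\alpha(y)$. This is the heart of the argument.

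Next I would fix an arbitrary $u\in\ggg$ and an arbitrary $v\in\ggg$ and
aim to show $\alpha(u+v)=\alpha(u)\oplus\alpha(v)$. The plan is a
\emph{path-connecting} or \emph{density} trick: I want to write the single
desired identity as a consequence of additivity relations that each hold on
the dense set $U$. Concretely, I would pick an auxiliary element $w$ and
consider the one-parameter families $x_t=u-tw$ and $y_t=v+tw$ for $t$ in the
ground field $K$. For all but finitely many $t$ the pair $(x_t,y_t)$ will lie
in $U$ (this is where density over an infinite field is used: the bad locus
is the zero set of a nonzero polynomial in $t$), and for such $t$
$$
\alpha(u+v)=\alpha(x_t+y_t)=\alpha(x_t)\oplus\alpha(y_t).
$$
Since $u+v=x_t+y_t$ is \emph{independent of $t$}, the left side is constant,
so it suffices to control the right side. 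I would combine several such
relations, decomposing the jump from $(x_t,y_t)$ to $(u,v)$ into elementary
additive steps each valid on $U$, in the spirit of the three-fold
application of Step~1 in the proof of Theorem~\ref{RS}. A clean way is to
first prove $\alpha(u+v)=\alpha(u)\oplus\alpha(v)$ whenever at least one of
$u,v$ can be perturbed into the good locus while the sum is held fixed, and
then to bootstrap to all pairs.

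The main obstacle I anticipate is the bootstrapping step: upgrading
additivity from the dense open set $U$ to \emph{every} pair $(u,v)$,
including pairs where both $C(u)$ and $C(v)$ are large (for instance when
$u$ or $v$ is central-like within a subalgebra). The difficulty is that
$\alpha$ is only assumed to be a set-theoretic bijection preserving
commutators, so I cannot use continuity of $\alpha$; I can only use
additivity identities that genuinely hold, namely those furnished by
Lemma~\ref{TL} on $U$. The resolution should be purely algebraic: express
$u$ and $v$ as $K$-linear combinations of elements in good position and use
the density of $U$ along affine lines to cancel the uncontrolled terms,
exactly the point where the hypothesis that $K$ is infinite is essential
(over a finite field $U$ need not meet a given line, and indeed the negative
results of Section~\ref{nr} show the statement can fail). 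Once full
additivity of $\alpha$ is established, $\alpha$ is an isomorphism of the
additive groups preserving commutators, hence an isomorphism of Lie rings,
and $\ggg$ is a UA-Lie ring.
```
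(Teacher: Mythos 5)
Your overall strategy --- openness of the set $U$ of pairs with trivial common centralizer, Lemma~\ref{TL} applied on $U$, then a decomposition/bootstrap to arbitrary pairs --- is the same skeleton as the paper's proof, but the step you yourself flag as the main obstacle contains a genuine gap, and the specific device you propose to close it does not work. Your plan is to hold the sum fixed and perturb, replacing $(u,v)$ by $(u-tw,\,v+tw)$ and arguing that for all but finitely many $t$ this pair lands in $U$. But $U$ being dense open in $\ggg\oplus\ggg$ does not imply it meets every affine subspace: the fiber of the sum map over $u+v$ can lie entirely in the complement of $U$. The pair $(u,-u)$ is a concrete counterexample: every pair of the form $(x,-x)$ satisfies $x\in C(x)\cap C(-x)$, so the whole anti-diagonal misses $U$ and no choice of $w$ and $t$ helps. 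More generally, even after you decompose $u$ and $v$ into sums of elements of the good locus $C(\ggg)=\{a\mid\exists\, b \text{ with } C(a)\cap C(b)=\{0\}\}$, you still face pairs $(a,b)$ with both entries individually good but with $C(a)\cap C(b)\ne\{0\}$, and such pairs cannot in general be perturbed into $U$ while the sum is held fixed.

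The missing idea --- the heart of the paper's Case 1 --- is to introduce an auxiliary element $c$ that \emph{changes} the sum rather than preserving it. For $a,b\in C(\ggg)$ one picks $c\in U(b)\cap(U(a)-b)$ (nonempty as an intersection of dense open subsets) and evaluates $\alpha(a+b+c)$ in two ways: as $\alpha(a)\oplus\alpha(b)\oplus\alpha(c)$ via Lemma~\ref{TL}, and as $\alpha(a+b)\oplus\alpha(c)\oplus\alpha(d)$ with an unknown discrepancy $d$. The computation inside the proof of Lemma~\ref{TL} then shows $[c,d]=0$; since $c$ ranges over a Zariski-dense subset, $d\in Z(\ggg)$, and the C-condition forces $Z(\ggg)=\{0\}$, so $d=0$. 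This trivial-center cancellation is exactly the mechanism your sketch appeals to (``cancel the uncontrolled terms'') but never supplies; without it the bootstrap (the paper's Cases 2 and 3, which decompose arbitrary $u$ and $v$ into sums of elements of $C(\ggg)$) does not close. Your final sentence (an additive commutator-preserving bijection is an isomorphism of Lie rings) is fine.
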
 

\begin{proof} 
For a Lie algebra $\mathfrak{g}$ with C-condition we let
$$
C(\mathfrak{g})=\{a\in\mathfrak{g}\ | \  \exists \ b\in\mathfrak{g} \ \text{with} \ C(a)\cap C(b)=\{0\}\}.
$$

\begin{lemma}
For any $a\in C(\mathfrak{g})$ the set $U(a)=\{b\in\mathfrak{g} \mid C(a)\cap C(b)=\{0\} \}$ is open and dense in Zariski topology on $\mathfrak{g}$.
\end{lemma}
\begin{proof}
Consider the linear map
$$
\mathfrak{g}\to\mathfrak{g}\oplus\mathfrak{g}, \ \ \ \  x\mapsto ([x,a], [x,y]). 
$$
At the point $y=b$ the rank of this map is maximal. So the rank is maximal on a dense open subset $U(a)$ in $\mathfrak{g}$.
\end{proof}

Since the set $C(\ggg)$ is the union of subsets $U(a), a\in C(\ggg)$, the set $C(\ggg)$ is open and dense in $\ggg$ as well. 

\smallskip

Now let $\alpha\colon\ggg\to\SSS$ be a commutator-preserving bijection of Lie rings.

\bigskip

{\it Case 1.}\ Suppose $a, b\in C(\mathfrak{g})$. The intersection $U(b)\cap(U(a)-b)$ of Zariski open subsets is nonempty and we take an element $c$ from this intersection.
By Lemma~\ref{TL}, we obtain
$$
\alpha(a)\oplus\alpha(b)\oplus\alpha(c)=\alpha(a)\oplus\alpha(b+c)=\alpha(a+b+c)=\alpha(a+b)\oplus\alpha(c)\oplus\alpha(d),
$$
with some $d\in\ggg$, and $\alpha(a)\oplus\alpha(b)=\alpha(a+b)\oplus\alpha(d)$. As we have seen in the proof of Lemma~\ref{TL},
the equality $\alpha(a+b+c)=\alpha(a+b)\oplus\alpha(c)\oplus\alpha(d)$ implies that the elements $c$ and $d$ commute. 
Since $c$ runs through a dense open subset, we conclude that $d\in Z(\ggg)$. But the algebra $\ggg$
satisfies the C-condition, so its center is trivial and $d=0$. 

\bigskip

{\it Case 2.}\ Suppose $b\in C(\mathfrak{g})$. Taking an element $a_1$ from $C(\mathfrak{g})\cap(a-C(\mathfrak{g}))\cap(a+b-C(\mathfrak{g}))$
and letting $a_2=a-a_1$, we obtain $a_1, a_2\in C(\mathfrak{g})$ with $a_1+a_2=a$ and $a_2+b\in C(\mathfrak{g})$. Using Case~1, we get
$$
\alpha(a)\oplus\alpha(b)=\alpha(a_1)\oplus\alpha(a_2)\oplus\alpha(b)=\alpha(a_1)\oplus\alpha(a_2+b)=\alpha(a_1+a_2+b)=\alpha(a+b).
$$

\bigskip

{\it Case 3.}\  Let $a,b$ be arbitrary elements in $\ggg$. Taking $b_1$ from $C(\ggg)\cap(b-C(\ggg))$ and letting $b_2=b-b_1$, we obtain
$b=b_1+b_2$ with $b_1,b_2\in C(\mathfrak{g})$. Using Case~2, we get
$$
\alpha(a+b)=\alpha(a+b_1+b_2)=\alpha(a+b_1)\oplus\alpha(b_2)=
$$
$$
=\alpha(a)\oplus\alpha(b_1)\oplus\alpha(b_2)=\alpha(a)\oplus\alpha(b_1+b_2)=\alpha(a)\oplus\alpha(b).
$$
\smallskip

This completes the proof of Theorem~\ref{TCC}.
\end{proof} 

If $Z(\mathfrak{R})\ne\{0\}$, then $\mathfrak{R}$ does not satisfy the C-condition. The following example proposed by Mikhail Zaitsev \cite[Remark~3.5]{A4} demonstrates that the converse statement does not hold.  

\begin{example}
\label{exx}
Consider a matrix Lie algebra

\smallskip

$$
\ggg=\left\{\left(
 \begin{array}{ccccc}
 a & 0 & a_{13} & a_{14} & a_{15} \\
 0 & a & a_{23} & a_{24} & a_{25} \\
 0 & 0 & a & 0 & a_{35} \\
 0 & 0 & 0 & a & a_{45} \\
 0 & 0 & 0 & 0 & 0 \\
 \end{array}
 \right);  \ a,a_{ij} \in K \right\}.
$$

\medskip

We claim that the center of this 9-dimensional Lie algebra is trivial, but the C-condition does not hold. In order to check this, we consider the commutator of two elements from $\ggg$:
$$
[A,B]=\left(
\begin{array}{ccccc}
 0 & 0 & 0 & 0 & c_{15} \\
 0 & 0 & 0 & 0 & c_{25} \\
 0 & 0 & 0 & 0 & c_{35} \\ 
 0 & 0 & 0 & 0 & c_{45} \\
 0 & 0 & 0 & 0 & 0 \\
\end{array}
\right),
$$
where
$$
c_{15}=ab_{15}+a_{13}b_{35}+a_{14}b_{45}-a_{15}b-a_{35}b_{13}-a_{45}b_{14}, \quad c_{35}= ab_{35}-a_{35}b
$$
and 
$$
c_{25}=ab_{25}+a_{23}b_{35}+a_{24}b_{45}-a_{25}b-a_{35}b_{23}-a_{45}b_{24}, \quad c_{45}= ab_{45}-a_{45}b.
$$
If $A$ is in the center of $\ggg$, then the condition $c_{15}=0$ for all $B$ implies 
$$
a=a_{13}=a_{14}=a_{15}=a_{35}=a_{45}=0,
$$ 
and the condition $c_{25}=0$ for all $B$ gives $a_{23}=a_{24}=a_{25}=0$, so $A=0$. 

\smallskip

Finally, the conditions $[A,D]=[B,D]=0$ are satisfied if we take $d=d_{35}=d_{45}=0$ and
$$
(d_{13},d_{14},d_{15})=(d_{23},d_{24},d_{25})=(u,v,w), 
$$
where $(u,v,w)$ is a non-zero solution of the system
\[
\left\{
\begin{aligned}
a_{35}x+a_{45}y-az & = 0\\
b_{35}x+b_{45}y-bz & = 0.\\
\end{aligned}
\right.
\] 
\end{example} 

\begin{problem}
Is the Lie algebra $\ggg$ from Example~\ref{exx} a UA-Lie ring?
\end{problem} 

\section{Semisimple Lie algebras and parabolic subalgebras}
\label{sla}

In this section we show that many important Lie algebras satisfy the C-condition. We assume that the group field $K$ is an algebraically closed field of characteristic zero. 

\begin{proposition} 
\label{PCC}
Let $\ggg$ be a semisimple Lie algebra over $K$ and $\ppp$ be a parabolic subalgebra in $\ggg$. Then the Lie algebra $\ppp$ satisfies the C-condition. 
\end{proposition}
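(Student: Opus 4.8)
The plan is to reduce to a standard parabolic and then exhibit two explicit elements with trivially intersecting centralizers. Since the C-condition is invariant under Lie algebra automorphisms (centralizers and their intersections are defined purely in terms of the bracket), and every parabolic subalgebra contains a Borel subalgebra, hence is conjugate to one containing a fixed Borel, I may assume that $\ppp$ is standard. So I would fix a Cartan subalgebra $\hhh\subseteq\ppp$, a root system $\Phi$ with simple roots $\Pi$ and positive system $\Phi^+$ chosen so that the Borel $\bbb=\hhh\oplus\bigoplus_{\alpha\in\Phi^+}\ggg_\alpha$ lies in $\ppp$. In particular $\ppp$ contains $\hhh$ and every simple root vector $e_\alpha$, $\alpha\in\Pi$.

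First I would take $a=h$ to be a regular element of $\hhh$, that is, an element with $\gamma(h)\ne 0$ for every root $\gamma$. Such $h$ exists because the regular locus is the complement in $\hhh$ of finitely many hyperplanes $\{\gamma=0\}$, hence a nonempty Zariski-open subset of the vector space $\hhh$ over the infinite field $K$. Decomposing an arbitrary $x\in\ppp$ according to the root-space grading as $x=t+\sum_\gamma x_\gamma$ with $t\in\hhh$ and $x_\gamma\in\ggg_\gamma$, one gets $[h,x]=\sum_\gamma\gamma(h)\,x_\gamma$; since $\gamma(h)\ne0$ for all $\gamma$, the condition $[h,x]=0$ forces every $x_\gamma=0$. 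Hence $C(h)=\hhh$, the centralizer being taken inside $\ppp$.

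Next I would take $b=\sum_{\alpha\in\Pi}e_\alpha$, which lies in $\ppp$ since all simple root vectors do. Because $C(a)=\hhh$, the intersection $C(a)\cap C(b)$ equals $\{t\in\hhh \mid [t,b]=0\}$. For $t\in\hhh$ we have $[t,b]=\sum_{\alpha\in\Pi}\alpha(t)\,e_\alpha$, and as the vectors $e_\alpha$ are linearly independent this vanishes exactly when $\alpha(t)=0$ for all $\alpha\in\Pi$. Since the simple roots form a basis of $\hhh^*$, this means $t=0$. Therefore $C(a)\cap C(b)=\{0\}$ and the C-condition holds.

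The argument is short and I do not expect a serious obstacle; the two points that require care are the invariance of the C-condition under conjugation (so that the reduction to a standard parabolic is legitimate) and the existence of a regular element, which uses that $K$ is infinite. It is worth noting that the proof never uses the negative root spaces contained in the Levi part of $\ppp$: it only uses that $\ppp$ contains the Cartan together with all simple positive root vectors. Consequently the same computation shows that any subalgebra of $\ggg$ containing a Borel subalgebra satisfies the C-condition.
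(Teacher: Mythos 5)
Your proof is correct and follows essentially the same route as the paper: a regular element $h\in\hhh$ with $C(h)=\hhh$, paired with a sum of positive root vectors whose roots span $\hhh^*$. The only (immaterial) difference is that you sum over the simple roots while the paper sums over all positive roots; both choices kill $\hhh$ for the same reason.
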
 

\begin{proof} 
Let $\bbb$ be a Borel subalgebra in $\ggg$ containing a Cartan subalgebra $\hhh$ and associated with a system of positive roots with respect to $\hhh$. We may assume that $\bbb$ is contained in $\ppp$.

Take an element $x\in\hhh$ that is not in the kernel of any root of $\ggg$ and let $y$ be the sum of root vectors over all positive roots of $\ggg$. Then $x,y\in\ppp$ and we have 
$C(x) = \hhh$ with $\hhh\cap C(y)=\{0\}$. This proves that the Lie algebra $\ppp$ satisfies the C-condition.
\end{proof}

Theorem~\ref{TCC} and Proposition~\ref{PCC} immediately imply the following result, first proved in~\cite[Theorem~1]{A3} and \cite[Corollary~4.6]{A4} for a weaker definition of unique addition. 

\begin{theorem}
\label{tmain}
Let $\ggg$ be a semisimple Lie algebra over an algebraically closed field $K$ of characteristic zero and $\ppp$ be a parabolic subalgebra in~$\ggg$. Then both $\ggg$ and $\ppp$ are UA-Lie rings. 
\end{theorem}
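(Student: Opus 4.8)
The plan is to derive Theorem~\ref{tmain} as a direct combination of the two preceding results, so the ``proof'' is essentially a citation of the machinery already established. First I would observe that the hypotheses of Theorem~\ref{tmain} are precisely the hypotheses of Proposition~\ref{PCC}: the field $K$ is algebraically closed of characteristic zero, $\ggg$ is semisimple, and $\ppp\subseteq\ggg$ is parabolic. Hence Proposition~\ref{PCC} applies verbatim and tells us that $\ppp$ satisfies the C-condition. I would then note separately that $\ggg$ itself satisfies the C-condition, either by taking $\ppp=\ggg$ (the whole algebra is a parabolic subalgebra of itself, being the stabilizer of the trivial flag) and invoking Proposition~\ref{PCC} again, or by recalling the remark after the definition of the C-condition: a semisimple $\ggg$ over a field of characteristic zero has trivial center and is generated by two elements, so it satisfies the C-condition.

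Next I would feed these two facts into Theorem~\ref{TCC}. Since $K$ is a field (in particular an infinite field, being algebraically closed), and since both $\ggg$ and $\ppp$ are finite-dimensional Lie algebras over $K$ satisfying the C-condition, Theorem~\ref{TCC} immediately yields that each of $\ggg$ and $\ppp$ is a UA-Lie ring. That completes the argument: there is no further calculation required, only the verification that the hypotheses match up.

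The only point demanding any care is the assertion that $\ggg$ and $\ppp$ are finite-dimensional, which is implicit in the standard setup for semisimple Lie algebras and their parabolic subalgebras over a field; and the observation that algebraic closedness of characteristic zero forces $K$ to be infinite, so the infinitude hypothesis of Theorem~\ref{TCC} is automatic. Neither of these is a genuine obstacle.

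If anything could be called the main obstacle, it is purely bookkeeping: making sure that $\ggg$ is covered and not merely its parabolic subalgebras. I would resolve this by explicitly remarking that $\ggg$ is a parabolic subalgebra of itself, so Proposition~\ref{PCC} covers the case of $\ggg$ as well, and hence a single appeal to Proposition~\ref{PCC} followed by Theorem~\ref{TCC} suffices for both claims.
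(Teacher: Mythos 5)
Your proposal is correct and follows exactly the route the paper takes: the paper states that Theorem~\ref{tmain} is an immediate consequence of Theorem~\ref{TCC} combined with Proposition~\ref{PCC}, and your verification that the hypotheses match (algebraic closedness implies $K$ is infinite, $\ggg$ is itself a parabolic subalgebra, finite-dimensionality is part of the setup) is the same bookkeeping the paper leaves implicit. Nothing further is needed.
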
 

Clearly, the arguments given above imply that if $\ggg$ is a direct sum of some classical Lie algebras $\sl_n(K)$, $\so_n(K)$ and $\sp_{2n}(K)$ over any field $K$ of characteristic zero, then 
$\ggg$ is a UA-Lie ring.  

Later a much more general result was proved by Mayorova. 

\begin{theorem} \cite[Theorem~6]{Ma}
Lie algebras of Chevalley type ($A_n$, $B_n$, $C_n$, $D_n$, $E_6$, $E_7$, $E_8$, $F_4$, and
$G_2$) over associative commutative rings with $\frac{1}{2}$ (with $\frac{1}{2}$ and $\frac{1}{3}$
in the case of $G_2$) are UA-Lie rings.
\end{theorem}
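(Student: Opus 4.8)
The plan is to reduce the general Chevalley-type statement to the C-condition verified in Proposition~\ref{PCC} wherever possible, and to isolate the extra difficulties that arise when the ground ring is no longer a field. First I would recall the Chevalley construction: a Lie algebra $\ggg_R$ of Chevalley type over a commutative ring $R$ (containing $\tfrac12$, and also $\tfrac13$ for $G_2$) is obtained from a Chevalley basis $\{h_i\}\cup\{e_\beta\}$ of the complex simple Lie algebra by base change, so that $\ggg_R=\bigoplus_i R h_i\oplus\bigoplus_\beta R e_\beta$ with structure constants that are integers (the Cartan integers and the $N_{\beta\gamma}$ coefficients). The presence of $\tfrac12$ guarantees that $[e_\beta,e_{-\beta}]=h_\beta$ can be recovered and that the root-space decomposition behaves well; this is exactly why those invertibility hypotheses appear.

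The core idea is that Lemma~\ref{TL} is purely ring-theoretic and so applies verbatim to Lie \emph{rings}: if a commutator-preserving bijection $\alpha$ is given and $C(a)\cap C(b)=\{0\}$ for suitable $a,b$, then $\alpha(a+b)=\alpha(a)\oplus\alpha(b)$. So I would try to mimic the field argument of Theorem~\ref{TCC}. I would first choose the analogues of the distinguished elements from Proposition~\ref{PCC}: take $x=\sum_i c_i h_i$ with the $c_i$ chosen so that $\beta(x)$ is a unit in $R$ for every root $\beta$, and $y=\sum_{\beta>0} e_\beta$, and attempt to show $C(x)\cap C(y)=\{0\}$. The subtlety is that over a general ring one cannot appeal to Zariski density or to the field-theoretic splitting of $a$ and $b$ into sums of ``generic'' elements that was used in Cases~2 and~3 of Theorem~\ref{TCC}. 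Instead one must run the additivity argument element by element, exploiting the explicit basis: it suffices to prove additivity on all pairs of basis vectors and to propagate it, using that each basis element $e_\beta$ lies in the image of $\ad$ of a commuting pair whose centralizers intersect trivially. This is the mechanism behind Mayorova's proof in \cite{Ma}.

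The key technical steps, in order, would be: (i) fix the Chevalley basis and record the integral structure constants; (ii) produce, for each basis vector $v$, a pair $(a_v,b_v)$ of elements of $\ggg_R$ with $C(a_v)\cap C(b_v)=\{0\}$ and with $v$ expressible through their sum, so that Lemma~\ref{TL} forces $\alpha$ to be additive on the relevant decomposition; (iii) show that $\alpha$ preserves the root-space grading up to the semigroup structure, using that the idempotent-like and nilpotent commutator relations are detectable from the bracket alone; and (iv) assemble these local additivity statements into full additivity of $\alpha$ on all of $\ggg_R$. I expect the main obstacle to be step~(ii) combined with the failure of the density/genericity arguments: over an arbitrary commutative ring with only $\tfrac12$ (and $\tfrac13$) inverted, one has very little room to ``move'' elements, so the trivial-intersection condition $C(a)\cap C(b)=\{0\}$ must be checked by direct computation with the structure constants rather than by a dimension count, and one must verify that the invertibility of $2$ (resp.\ $3$) is exactly what makes the relevant linear systems have only the zero solution. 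Ensuring this uniformly across all types $A_n,\dots,G_2$ is where the real work lies, and it is for this reason that I would ultimately defer to the detailed case analysis of \cite[Theorem~6]{Ma} rather than reproduce it here.
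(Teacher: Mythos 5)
The paper does not prove this theorem at all: it is quoted from Mayorova's work \cite[Theorem~6]{Ma}, and the only comment offered is that ``the proof of this result is based on explicit computations with root systems.'' So there is no in-paper argument to compare yours against, and the relevant question is whether your proposal stands on its own. It does not, for a concrete reason: the steps you label (ii)--(iv) --- producing, for each Chevalley basis vector $v$, a pair $(a_v,b_v)$ with $C(a_v)\cap C(b_v)=\{0\}$ over an arbitrary commutative ring with $\tfrac12$ (and $\tfrac13$), and then propagating additivity from these special pairs to all of $\ggg_R$ --- are precisely the content of the theorem, and you leave them entirely unexecuted, explicitly deferring to the case analysis in \cite{Ma}. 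You correctly diagnose why the paper's own machinery cannot be transplanted: Theorem~\ref{TCC} needs a finite-dimensional algebra over an \emph{infinite field}, because Cases~1--3 of its proof rest on Zariski-density arguments (intersecting dense open subsets such as $U(b)\cap(U(a)-b)$ and $C(\ggg)\cap(a-C(\ggg))\cap(a+b-C(\ggg))$), and a commutative ring with $\tfrac12$ can be finite or have zero divisors, so none of that genericity is available. But diagnosing the obstruction is not the same as overcoming it.

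There is also a smaller issue in your step for choosing the distinguished elements: you propose to take $x=\sum_i c_ih_i$ with $\beta(x)$ a unit for every root $\beta$, but over a general commutative ring such an element need not exist (the values $\beta(x)$ are constrained integer linear combinations of the $c_i$, and the ring may have no units outside a small set), so even the analogue of Proposition~\ref{PCC} cannot be set up this way uniformly. This is exactly why the actual proof must work basis vector by basis vector with the integral structure constants and with the specific invertibility of $2$ and $3$, rather than with a single ``regular'' pair $(x,y)$. In short: your proposal is a reasonable roadmap that is consistent with the paper's one-sentence description of how \cite{Ma} proceeds, but as a proof it has a gap coextensive with the theorem itself.
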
 

The proof of this result is based on explicit computations with root systems.

\section{Seaweed Lie algebras} 
\label{sw}

The aim of this section is to generalize the idea of the proof of Proposition~\ref{PCC} to a wider class of Lie algebras. Let $\aaa$ be a finite-dimensional Lie algebra over an infinite field $K$ 
and $\hhh\subseteq\aaa$ be a commutative subalgebra. We say that the pair $(\aaa,\hhh)$ is \emph{suitable} if $\aaa=\oplus_{i}\aaa_{\lambda_i}$, where $\lambda_i\in\hhh^*$ are linear functions on $\hhh$, $\aaa_{\lambda_i}:=\{x\in\aaa \, | \, [h,x]=\lambda_i(h)x \ \forall \, h\in\hhh\}$, we have $\cap_i\Ker\lambda_i=\{0\}$, and $\aaa_0=\hhh$. 

\begin{lemma}
\label{suit}
Let $(\aaa,\hhh)$ be a suitable pair. Then the Lie algebra $\aaa$ is a UA-Lie ring.
\end{lemma}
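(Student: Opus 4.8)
The plan is to verify the C-condition for the suitable pair $(\aaa,\hhh)$ and then invoke Theorem~\ref{TCC}, exactly mirroring the strategy of Proposition~\ref{PCC}. Concretely, I would exhibit two elements $x,y\in\aaa$ whose centralizers intersect trivially. Following the parabolic case, I would take $x\in\hhh$ to be a sufficiently generic element and $y$ to be a sum of nonzero weight vectors, one chosen from each nonzero weight space $\aaa_{\lambda_i}$ (with $\lambda_i\neq 0$).

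First I would analyze $C(x)$ for a generic $x\in\hhh$. Since $\aaa=\oplus_i\aaa_{\lambda_i}$ with $\aaa_0=\hhh$, the adjoint action of $x$ on a weight vector $v\in\aaa_{\lambda_i}$ gives $[x,v]=\lambda_i(x)v$. The condition $\cap_i\Ker\lambda_i=\{0\}$ means that the nonzero weights $\lambda_i$ separate points of $\hhh$, so I can pick $x\in\hhh$ with $\lambda_i(x)\neq 0$ for every nonzero weight $\lambda_i$ — this is where the infinitude of $K$ enters, guaranteeing such an $x$ exists since it avoids finitely many proper hyperplanes $\Ker\lambda_i$. For such $x$ we have $C(x)=\aaa_0=\hhh$, because any element $v=\sum v_i$ (with $v_i\in\aaa_{\lambda_i}$) commuting with $x$ must satisfy $\lambda_i(x)v_i=0$ for all $i$, forcing $v_i=0$ whenever $\lambda_i\neq 0$.

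The second step is to show $\hhh\cap C(y)=\{0\}$ for the chosen $y=\sum_{\lambda_i\neq 0} y_i$ with each $y_i\in\aaa_{\lambda_i}$ nonzero. For $h\in\hhh$, compute $[h,y]=\sum_{\lambda_i\neq 0}\lambda_i(h)y_i$. Since the $y_i$ lie in distinct weight spaces $\aaa_{\lambda_i}$ (hence are linearly independent), $[h,y]=0$ forces $\lambda_i(h)=0$ for all nonzero $\lambda_i$, i.e. $h\in\cap_i\Ker\lambda_i=\{0\}$. Combining the two steps, $C(x)\cap C(y)\subseteq\hhh\cap C(y)=\{0\}$, so the C-condition holds, and Theorem~\ref{TCC} yields that $\aaa$ is a UA-Lie ring.

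The main subtlety I anticipate is a degenerate edge case rather than a deep obstacle: if $\hhh=\{0\}$ (equivalently, if there are no nonzero weights, so that $\cap_i\Ker\lambda_i=\{0\}$ holds trivially as an intersection over an empty or trivial index set), then the argument must be handled separately — but in that situation the definition forces $\aaa=\aaa_0=\hhh=\{0\}$, a trivial case. The one genuine point requiring care is the choice of $y$: I must ensure each weight component $y_i$ is genuinely nonzero and that distinct weights give independent components, which is automatic from the directness of the decomposition $\aaa=\oplus_i\aaa_{\lambda_i}$. I expect the verification that $C(x)=\hhh$ exactly (rather than merely $C(x)\supseteq\hhh$) to be the step most worth stating explicitly, since it relies crucially on the hypothesis $\aaa_0=\hhh$ in the definition of a suitable pair.
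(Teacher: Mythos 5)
Your proposal is correct and follows essentially the same route as the paper: choose a generic $h\in\hhh$ off the finitely many hyperplanes $\Ker\lambda_i$ so that $C(h)=\hhh$, then take $y=\sum_i y_i$ with nonzero components in each nonzero weight space so that $C(y)\cap\hhh=\{0\}$, and conclude via Theorem~\ref{TCC}. Your explicit remarks on where the infinitude of $K$ and the hypothesis $\aaa_0=\hhh$ enter are accurate and only make the paper's terse argument more transparent.
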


\begin{proof}
Take an element $h\in\hhh$ such that $\lambda_i(h)\ne 0$ for any non-zero $\lambda_i$. Then $C(h)=\hhh$.  If we let $y=\sum_i y_i$ with $0\ne y_i\in\aaa_{\lambda_i}$, then
$C(y)\cap\hhh=\{0\}$ and the algebra $\aaa$ satisfies the C-condition. So the assertion follows from Theorem~\ref{TCC}. 
\end{proof}

\medskip

Lemma~\ref{suit} can be applied in the following situation. Assume that the field $K$ is an algebraically closed field of characteristic zero, $\ggg$ is a semisimple Lie algebra over $K$ and 
$\hhh$ is a Cartan subalgebra in $\ggg$. A subalgebra $\aaa\subseteq\ggg$ is called \emph{regular} if $\aaa$ contains $\hhh$. In this case $\aaa$ as a vector space is a direct sum of $\hhh$
and some root subspaces $\ggg_{\alpha_i}$ in $\ggg$. 

We call a regular subalgebra $\aaa$ \emph{ample} if the roots $\alpha_i$ such that $\ggg_{\alpha_i}$ is contained in $\aaa$ span the vector space $\hhh^*$. Equivalently, this condition means that $\cap_i\Ker\alpha_i=\{0\}$.  Since there is no zero root, this condition is equivalent to the condition that the pair $(\aaa,\hhh)$ is suitable. 

\begin{example}
Any parabolic subalgebra $\ppp$ in a semisimple Lie algebra $\ggg$ is an ample regular subalgebra. 
\end{example}

\begin{proposition}
\label{reg}
A regular subalgebra $\aaa$ of a semisimple Lie algebra $\ggg$ is a UA-Lie ring if and only if $\aaa$ is ample. 
\end{proposition}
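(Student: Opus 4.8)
The statement is an equivalence, so I would prove the two implications separately; the substance is in the ``only if'' direction. \emph{The ``if'' direction} is essentially the reduction already recorded in the discussion preceding the proposition. If $\aaa$ is ample, then, since $\ggg$ has no zero root and the zero weight space of $\hhh$ acting on $\ggg$ is exactly $\hhh$, the pair $(\aaa,\hhh)$ is suitable: its weights are the roots $\alpha$ with $\ggg_\alpha\subseteq\aaa$ together with $0$, we have $\aaa_0=\aaa\cap\hhh=\hhh$, and the condition $\bigcap_\alpha\Ker\alpha=\{0\}$ is precisely ampleness. Hence Lemma~\ref{suit} applies and $\aaa$ is a UA-Lie ring, so this step amounts to spelling out the reduction.

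\emph{For the ``only if'' direction} I would argue by contraposition, assuming $\aaa$ is not ample and producing enough structure to invoke the negative result Proposition~\ref{neg}. Write $R(\aaa)$ for the set of roots $\alpha$ with $\ggg_\alpha\subseteq\aaa$ and set $V=\bigcap_{\alpha\in R(\aaa)}\Ker\alpha\subseteq\hhh$, so that non-ampleness means $V\ne\{0\}$. First I would identify the center: an element $h_0+\sum_\alpha c_\alpha e_\alpha\in\aaa$ is central iff every $c_\alpha=0$ (bracket with $\hhh$, using $\alpha\ne 0$ to solve $c_\alpha\alpha(h)=0$) and then $\alpha(h_0)=0$ for all $\alpha\in R(\aaa)$, i.e. $h_0\in V$. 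Thus $Z(\aaa)=V\ne\{0\}$; since $V$ is a nonzero $K$-subspace and $K$ is infinite, $|Z(\aaa)|=\infty>2$ and $|\aaa|>4$ hold automatically.

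\emph{The main step}, which I expect to require the most care, is to check that $\aaa$ is not perfect, i.e. $[\aaa,\aaa]\ne\aaa$. Here I would compute the commutant on the root basis: $[\hhh,\hhh]=0$; $[\hhh,\ggg_\alpha]=\ggg_\alpha$ for $\alpha\in R(\aaa)$ (as $\alpha\ne0$); and $[\ggg_\alpha,\ggg_\beta]$ lands in a root space when $\alpha+\beta$ is a root, vanishes when $\alpha+\beta$ is not a root, and lands in $\hhh$ (a multiple of the coroot $t_\alpha$) only when $\beta=-\alpha$. Consequently $[\aaa,\aaa]\cap\hhh=\operatorname{span}\{t_\alpha\mid \alpha,-\alpha\in R(\aaa)\}\subseteq\operatorname{span}\{t_\alpha\mid\alpha\in R(\aaa)\}$. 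Under the Killing-form identification $\hhh^*\cong\hhh$, $\alpha\mapsto t_\alpha$, this last span is the image of $\operatorname{span}_{\hhh^*}R(\aaa)$, which is a proper subspace of $\hhh^*$ exactly because $\aaa$ is not ample. Hence $[\aaa,\aaa]\cap\hhh\subsetneq\hhh$, so $[\aaa,\aaa]\ne\aaa$. The delicate point is precisely this: one must rule out that the $\hhh$-part of $[\aaa,\aaa]$ secretly recovers all of $\hhh$, and the pairs-only nature of the coroots appearing forces a strict inclusion once $R(\aaa)$ fails to span $\hhh^*$.

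Finally I would assemble the three facts $|\aaa|>4$, $[\aaa,\aaa]\ne\aaa$, and $Z(\aaa)\ne\{0\}$ with $|Z(\aaa)|>2$, which are exactly the hypotheses of Proposition~\ref{neg}; its conditional clause ``$Z(\aaa)\cap[\aaa,\aaa]=\{0\}$ implies $|Z(\aaa)|>2$'' holds vacuously of the implication's meaning since $|Z(\aaa)|>2$ unconditionally. Therefore $\aaa$ is not a UA-Lie ring, which establishes the contrapositive and completes the equivalence. Everything outside the commutant computation is either elementary bookkeeping or a direct appeal to Lemma~\ref{suit} and Proposition~\ref{neg}.
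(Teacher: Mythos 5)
Your proof is correct and follows essentially the same route as the paper: the ``if'' direction is the reduction to Lemma~\ref{suit}, and the ``only if'' direction verifies the hypotheses of Proposition~\ref{neg} by exhibiting the nonzero central subspace $\bigcap_i\Ker\alpha_i$ and showing that $[\aaa,\aaa]\cap\hhh$ is a proper subspace of $\hhh$, so $\aaa$ is not perfect. The only cosmetic difference is that you compute $[\aaa,\aaa]\cap\hhh$ directly as a span of coroots $t_\alpha$, while the paper identifies it as the Cartan subalgebra of the semisimple part of $\aaa$; both give the same conclusion.
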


\begin{proof}
The ``if'' part follows from Lemma~\ref{suit}. As for the inverse implication, let $\hhh'=\cap_i\Ker\alpha_i$. Clearly, the subalgebra $\hhh'$ is contained in the center of $\aaa$. 

The derived subalgebra $[\aaa,\aaa]$ is spanned by all root subspaces $\ggg_{\alpha_i}$ and subspaces in $\hhh$ that are commutators of root subspaces $[\ggg_{\alpha_i},\ggg_{-\alpha_i}]$ corresponding to opposite roots. The latter subspaces lie in Lie subalgebras isomorphic to $\sl_2(K)$, and $\hhh\cap [\aaa,\aaa]$ is the Cartan subalgebra of the semisimple part of the algebra $\aaa$. It proves that $[\aaa,\aaa]\cap\hhh'=\{0\}$ and if $\hhh'\ne 0$, then $\aaa$ is not a UA-Lie ring by Proposition~\ref{neg}. 
\end{proof} 

A remarkable class of regular subalgebras of a semisimple Lie algebra $\ggg$ form so-called seaweed subalgebras introduced by Dergachev and Kirillov~\cite{DK} in the case $\ggg=\sl_n(K)$ and by Panyushev~\cite{Pa} in the general case; see also \cite{CCMR} for a nice recent survey. 

Recall that a subalgebra $\aaa$ in a semisimple Lie algebra $\ggg$ is called \emph{seaweed} if $\aaa$ is the intersection of two parabolic subalgebras $\ppp$ and $\ppp'$ such that $\ggg=\ppp+\ppp'$. Take a Borel subalgebra $\bbb\subseteq\ggg$ and a Cartan subalgebra $\hhh$ in $\bbb$. It is observed in~\cite{Pa} that up to conjugation one may assume that $\ppp$ contains $\bbb$ and $\ppp'$ contains $\bbb_{-}$, where $\bbb_{-}$ is the opposite Borel subalgebra. In particular, $\aaa$ is a regular subalgebra in $\ggg$.  

\smallskip 

Proposition~\ref{reg} immediatly implies the following result.

\begin{proposition}
\label{psw}
A seaweed subalgebra $\aaa$ of a semisimple Lie algebra $\ggg$ is a UA-Lie ring if and only if $\aaa$ is ample. 
\end{proposition}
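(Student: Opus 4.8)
The plan is to reduce this statement entirely to Proposition~\ref{reg}, since a seaweed subalgebra is, up to conjugation, a particular kind of regular subalgebra. First I would make explicit why every seaweed $\aaa=\ppp\cap\ppp'$ is regular. By the normalization recalled just above, we may assume $\ppp\supseteq\bbb$ and $\ppp'\supseteq\bbb_{-}$, where $\bbb$ and $\bbb_{-}$ are opposite Borel subalgebras sharing the Cartan subalgebra $\hhh$. Since $\hhh\subseteq\bbb\subseteq\ppp$ and $\hhh\subseteq\bbb_{-}\subseteq\ppp'$, we get $\hhh\subseteq\ppp\cap\ppp'=\aaa$. Thus $\aaa$ contains a Cartan subalgebra and is a regular subalgebra in the sense defined earlier, namely $\aaa=\hhh\oplus\bigoplus_i\ggg_{\alpha_i}$ as a vector space, for the roots $\alpha_i$ whose root spaces lie in $\aaa$.

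Next I would observe that the notion of \emph{ample} for the seaweed $\aaa$ coincides verbatim with the one used in Proposition~\ref{reg}: ampleness means that the $\alpha_i$ span $\hhh^*$, equivalently $\cap_i\Ker\alpha_i=\{0\}$. With both the hypothesis (regularity) and the defining condition (ampleness) matching, the proposition follows in both directions by a direct application of Proposition~\ref{reg} to $\aaa$. No new computation with root systems is needed; the content is purely the identification of seaweeds as a subclass of ample-testable regular subalgebras.

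The only point that warrants a word of caution --- and the nearest thing to an obstacle --- is the conjugation step used in the normalization of $\ppp$ and $\ppp'$, since one must be sure that both ``UA-Lie ring'' and ``ample'' are invariant under it. This is immediate: conjugation by an inner automorphism is a Lie algebra isomorphism, hence preserves the UA-Lie ring property, and it carries $\hhh$ to a Cartan subalgebra while permuting the relevant roots, so it preserves the spanning condition $\cap_i\Ker\alpha_i=\{0\}$. Equivalently, one may note that the relevant invariant is the central subalgebra $\hhh'=\cap_i\Ker\alpha_i$ with $[\aaa,\aaa]\cap\hhh'=\{0\}$, which is intrinsic to $\aaa$; ampleness is precisely the vanishing of this central part. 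After this remark there is nothing further to prove.
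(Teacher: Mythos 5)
Your proposal is correct and matches the paper's argument exactly: the paper derives Proposition~\ref{psw} as an immediate consequence of Proposition~\ref{reg}, having just noted (following Panyushev) that a seaweed is, up to conjugation, a regular subalgebra containing $\hhh=\bbb\cap\bbb_{-}$. Your additional remark on the invariance of the UA-property and of ampleness under conjugation is a sensible elaboration of a point the paper leaves implicit.
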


\section{Solvable Lie algebras and Mal'cev splittings} 
\label{ms}

In this section we show that the C-condition is fulfilled for a wide class of solvable Lie algebras. Further we consider finite-dimensional Lie algebras over an algebraically closed field $K$ of characteristic zero. 

\begin{definition}
A solvable Lie algebra $\rrr$ is {\it Mal'cev split} if $\rrr=\kkk\oplus\nnn$, where $\mathfrak{n}$ is a nilpotent ideal, $\mathfrak{k}$ is a commutative subalgebra, and the adjoint action of $\kkk$ on $\nnn$ is faithful and completele reducible. 
\end{definition} 

Here $\nnn=\oplus_i\nnn_{\lambda_i}$, where 
$$
\nnn_{\lambda_i}=\{x\in\nnn \, | \, [y,x]=\lambda_i(y)x \ \forall y\in\kkk\}
$$
and $\{\lambda_i\}$ are linear functions on $\kkk$ with $\cap_i\Ker\lambda_i=\{0\}$. 

\bigskip

Any algebraic solvable Lie algebra is Mal'cev split. Moreover, every solvable Lie algebra $\rrr$ admits the {\it Mal'cev splitting}, i.e. an embedding $\gamma\colon\rrr\to M(\rrr)$, where $M(\rrr)=\kkk\oplus\nnn$ is Mal'cev split and $\gamma(\rrr)$ is an ideal in $M(\rrr)$ with $M(\rrr)=\kkk+\gamma(\rrr)=\gamma(\rrr)+\nnn$. The Mal'cev splitting of $\rrr$ exists and is unique up to isomorphism; see~\cite{Mal}. 

\begin{definition}
A solvable Lie algebra $\rrr$ is {\it admissible} if for $M(\rrr)=\kkk\oplus\nnn$ there is $x\in\nnn$ with $C(x)\cap\nnn_0=\{0\}$. 
\end{definition} 

\begin{example}
Any parabolic subalgebra $\ppp$ in a semisimple Lie algebra $\ggg$ is a Lie algebra with admissible radical. 
\end{example}

The following result is obtained in~\cite{AT} in a slightly weaker form. The rest of this section is devoted to its proof.

\begin{theorem}
\label{tar}
Let $\ggg$ be a finite-dimensional Lie algebra with admissible radical. Then $\ggg$ satisfies the C-condition. In particular, $\ggg$ is a UA-Lie ring. 
\end{theorem}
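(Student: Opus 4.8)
The plan is to verify the C-condition and then apply Theorem~\ref{TCC}. Fix a Levi decomposition $\ggg=\sss\ltimes\rrr$ with $\sss$ semisimple and $\rrr$ the admissible radical, and a Cartan subalgebra $\hhh_\sss\subseteq\sss$ with root vectors $e_\alpha$. As a semisimple algebra over $K$, the subalgebra $\sss$ satisfies the C-condition: for a regular $a_\sss\in\hhh_\sss$ we have $C_\sss(a_\sss)=\hhh_\sss$, and for $b_\sss:=\sum_\alpha e_\alpha$ we have $\hhh_\sss\cap C_\sss(b_\sss)=\{0\}$, so $C_\sss(a_\sss)\cap C_\sss(b_\sss)=\{0\}$. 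Here $C_\sss$ denotes the centralizer taken inside $\sss$.

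The first step is a clean reduction to the radical. For arbitrary $a_\rrr,b_\rrr\in\rrr$ set $a=a_\sss+a_\rrr$ and $b=b_\sss+b_\rrr$. If $t=t_\sss+t_\rrr\in C_\ggg(a)\cap C_\ggg(b)$ with $t_\sss\in\sss$, $t_\rrr\in\rrr$, then, since $\rrr$ is an ideal, the $\sss$-component of $[a,t]$ is exactly $[a_\sss,t_\sss]$, and likewise for $b$; hence $t_\sss\in C_\sss(a_\sss)\cap C_\sss(b_\sss)=\{0\}$. Thus every common centralizer element lies in $\rrr$, and it remains to arrange that no nonzero $t\in\rrr$ is annihilated by both $\ad a$ and $\ad b$.

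For the radical I would imitate Proposition~\ref{PCC}, using the Mal'cev splitting $M(\rrr)=\kkk\oplus\nnn$ to supply the semisimple directions that $\rrr$ itself may lack. The canonical splitting is $\sss$-equivariant with $[\sss,\kkk]=\{0\}$, so $\ttt:=\hhh_\sss\oplus\kkk$ is a commutative subalgebra of $M(\ggg):=\sss\ltimes M(\rrr)$ acting semisimply, and the zero-weight space of $\ggg$ under $\ttt$ is $\ttt\oplus\nnn^\ttt$ with $\nnn^\ttt\subseteq\nnn_0$. I would pick $a_\rrr\in\rrr$ so that the semisimple part of $\ad a$ is a regular element $h=a_\sss+k_0\in\ttt$ (possible because the Mal'cev splitting maps $\rrr$ onto $\kkk$); then $\Ker(\ad a)\subseteq\Ker(\ad h)$ confines $t$ to $\ttt\oplus\nnn^\ttt$. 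For $b$ I would take $b_\rrr$ a generic element of the nilradical $\nnn_\ggg\subseteq\ggg$, so that $\ad b_\rrr$ is injective on $\kkk\oplus\nnn_0$: injectivity on the $\kkk$-summand comes from the faithfulness condition $\cap_i\Ker\lambda_i=\{0\}$, and injectivity on the $\nnn_0$-summand is exactly the admissibility condition $\{z\in\nnn_0\mid[z,x]=0\}=\{0\}$. Combined with the vanishing on the $\hhh_\sss$-direction forced by $b_\sss=\sum_\alpha e_\alpha$ (together with $[\hhh_\sss,t]=0$, which gives $[\sss,t]=0$), this yields $t=0$, so $\ggg$ satisfies the C-condition and Theorem~\ref{TCC} finishes the proof.

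I expect the main obstacle to be this last step, where the torus direction $\kkk$ and the zero-weight nilpotent direction $\nnn_0$ must be killed simultaneously by a single element: the per-weight form of $[t,b]=0$ couples the $\kkk$-component and the $\nnn_0$-component of $t$, so separate injectivity on $\kkk$ and on $\nnn_0$ does not immediately give injectivity of $\ad b_\rrr$ on $\kkk\oplus\nnn_0$, and decoupling them requires a careful genericity/rank argument. A second delicate point is realizing the admissibility element and the separating weight vectors, which a priori live in the nilshadow $\nnn$, by an honest element of the smaller nilradical $\nnn_\ggg$ (whose image is $\gamma(\rrr)\cap\nnn$, of codimension $\dim\kkk$ in $\nnn$); this is where the structural identity $M(\rrr)=\gamma(\rrr)+\nnn$, together with the faithful $\kkk$-action, must be used. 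When $\nnn_0=\{0\}$ these difficulties evaporate and the statement already follows from Lemma~\ref{suit}; admissibility is precisely the hypothesis controlling the case $\nnn_0\ne\{0\}$.
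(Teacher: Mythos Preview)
Your overall architecture matches the paper's: Levi decomposition, reduction of the common centralizer to $\rrr$, and use of the Mal'cev splitting. But the proposal leaves open precisely the two steps that carry the content, and the paper resolves them by different (and simpler) means than you anticipate.

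For the decoupling of $\kkk$ and $\nnn_0$, no genericity or rank argument is needed. Take $b=\sum_i b_{\lambda_i}$ with each $b_{\lambda_i}\neq 0$ and $C(b)\cap\nnn_0=\{0\}$ (admissibility gives such $b$ on an open set). If $d+e\in C(b)$ with $d\in\kkk$, $e\in\nnn_0$, then weight by weight $\lambda_i(d)\,b_{\lambda_i}=-[e,b_{\lambda_i}]$; since $\ad(e)$ is nilpotent and preserves each $\nnn_{\lambda_i}$, its only eigenvalue is $0$, forcing $\lambda_i(d)=0$ for all $i$, hence $d=0$ and then $e=0$. This eigenvalue trick is the missing idea in your sketch.

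For combining with the Levi factor, you introduce an extra hypothesis, namely that the Mal'cev decomposition is $\sss$-equivariant with $[\sss,\kkk]=0$; you then need $\ad h$ to be the semisimple part of $\ad a$, which again rests on this compatibility. The paper avoids this entirely: it first proves the C-condition for $M(\rrr)$, then transfers it to $\rrr$ itself (writing $a=p_1+q$, $b=r+p_2$ with $p_i\in\rrr$, and reducing $q$ into $\nnn_0$ by successive brackets with $a$), and only afterwards couples $\lll$ and $\rrr$ via a pure genericity lemma (if some pair realizes $C(a)\cap C(b)=\{0\}$, then a Zariski-dense set of pairs does). No interaction between $\sss$ and the Mal'cev splitting is ever invoked, so your second flagged obstacle and the equivariance claim both disappear.
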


\begin{proof}
Let $\ggg=\lll\rightthreetimes\rrr$, where $\lll$ is a maximal semisimple subalgebra in $\ggg$ and $\rrr$ is the radical of $\ggg$. We know from Proposition~\ref{PCC} that $\lll$ satisfies the C-condition. 

\begin{lemma}
\label{ll1}
The Lie algebra $M(\rrr)$ satisfies the C-condition. 
\end{lemma}

\begin{proof}
Take $a\in\kkk$ with $\lambda_i(a)\ne 0$ for all non-zero $\lambda_i$. Then $C(a)=\kkk\oplus\nnn_0$. By definition of admissibility, there exists an element $x\in\nnn$ such that 
$C(x)\cap\nnn_0=\{0\}$. This condition holds for all elements $x$ from a Zariski open dense subset $V$ in $\nnn$, because if the rank of the linear map $\nnn_0\to\nnn$, $y\mapsto [x,y]$ 
is maximal at some point $x\in\nnn$, it is maximal for all points $x$ from an open dense subset. Take $b=\sum b_{\lambda_i}$ with $0\ne b_{\lambda_i}\in\nnn_{\lambda_i}$ and
$C(b)\cap\nnn_0=\{0\}$. 

We claim that $C(b)\cap(\kkk\oplus\nnn_0)=\{0\}$. Indeed, assume that $d+e\in C(b)$, $d\in\kkk$, $e\in\nnn_0$. Then $[d+e,b]=0$. Since $[\nnn_0,\nnn_{\lambda_i}]\subseteq\nnn_{\lambda_i}$, we have $\lambda_i(d)b_{\lambda_i}=-[e,b_{\lambda_i}]$. But the operator $\ad(e)$ is nilpotent, so all $\lambda_i(d)=0$. The condition $\cap_i\Ker\lambda_i=\{0\}$ implies $d=0$, and the condition $C(b)\cap\nnn_0=\{0\}$ shows that $e=0$. 

We conclude that $C(a)\cap C(b)=\{0\}$. 
\end{proof}

\begin{lemma}
The Lie algebra $\rrr$ satisfies the C-condition. 
\end{lemma}

\begin{proof}
Take elements $a,b\in M(\rrr)$ as in Lemma~\ref{ll1}. By definition of the Mal'cev splitting, there are elements $p_1,p_2\in\rrr$ such that $a=p_1+q$ and $b=r+ p_2$ with $q\in\nnn$
and $r\in\kkk$. 

We can assume that $q\in\nnn_0$. Indeed, let $q=\sum_i q_{\lambda_i}$. Then $[a,q]=\sum_i\lambda_i(a)q_{\lambda_i}=[p_1,q]\in\rrr$ and $\lambda_i(a)\ne 0$ for all non-zero $\lambda_i$. Adding to $q$ the element $[a,q]\in\rrr\cap\nnn$ with a suitable coefficient, we cancel one of the components $q_{\lambda_i}$ with non-zero $\lambda_i$. Continuing this process, we come to $q\in\nnn_0$.

We claim that $C(p_1)\subseteq\kkk\oplus\nnn_0$. Indeed, if $[a-q,l]=0$, then $\sum_i\lambda_i(a)l_{\lambda_i}=[q,l]$. But the operator $\ad(q)$ is nilpotent and so all $\lambda_i(a)l_{\lambda_i}=0$. For non-zero $\lambda_i$ we have $\lambda_i(a)\ne 0$, so $l_{\lambda_i}=0$. 

Finally, let us show that $C(p_2)\cap(\kkk\oplus\nnn_0)=\{0\}$. If $[b-r,t]=[b,t]=0$ with $t\in\kkk\oplus\nnn_0$, then $t\in C(b)$. But it is shown in the proof of Lemma~\ref{ll1} that
$C(b)\cap(\kkk\oplus\nnn_0)=\{0\}$.

Thus, we proved that $C(p_1)\cap C(p_2)=\{0\}$, 
\end{proof}

\begin{lemma}
\label{lemmel}
If a finite-dimensional Lie algebra $\fff$ satifies the C-condition, then there is a Zariski open dense subset $U\subseteq\fff\oplus\fff$ such that $C(a)\cap C(b)=\{0\}$ for all $(a,b)\in U$.
\end{lemma}

\begin{proof}
If for some pair $(a,b)$ the rank of the linear map $\fff\to\fff\oplus\fff$, $c\mapsto ([a,c],[b,c])$ equals $\dim\fff$ and, in particular, is maximal possible, then the same holds for all pairs
$(a,b)$ from a Zariski open dense subset $U \subseteq\fff\oplus\fff$. 
\end{proof}

Now we are going to check the C-condition for the Lie algebra $\ggg$. Take Zariski open dense subsets $U_1\subseteq\lll\oplus\lll$ and $U_2\subseteq\rrr\oplus\rrr$ as in Lemma~\ref{lemmel}. In particular, if $(z_1,z_2)\in U_2$ then $C(z_1)\cap C(z_2)\cap\rrr=\{0\}$. It follows that we have $C(x_1)\cap C(x_2)\cap\rrr=\{0\}$ for all $(x_1,x_2)$ from 
a Zariski open dense subset $W\subseteq\ggg\oplus\ggg$. Indeed, we can apply the same arguments as in the proof of Lemma~\ref{lemmel} to the linear map $\rrr\to\ggg\oplus\ggg$,
$z\mapsto ([x_1,z],[x_2,z])$.

Choose elements $x_1=y_1+z_1, x_2=y_2+z_2\in\ggg$ with $(y_1,y_2)\in U_1$, $z_1,z_2\in\rrr$, and $(x_1,x_2)\in W$. This is possible because two Zariski open dense subsets $U_1\oplus\rrr\oplus\rrr$ and $W$ in $\ggg\oplus\ggg$ have non-empty intersection. 

If $x=y+z\in C(x_1)\cap C(x_2)$, then $y\in C(y_1)\cap C(y_2)$, so $y=0$. It implies $[z,x_1]=[z,x_2]=0$, and $z=0$ by assumption on the subset $W$. It means that $C(x_1)\cap C(x_2)=\{0\}$, and Theorem~\ref{tar} is proved.
\end{proof}

\section{UA-Lie algebras}
\label{uala}

Now we try to take into account the specifics of Lie algebras in the definition of the uniqueness of addition.

\begin{definition}
Let $K$ be a field. Consider two $K$-vector spaces $V$ and $W$. A map $\alpha\colon V\to W$ is {\it semilinear} if it is additive, i.e. $\alpha(v_1+v_2)=\alpha(v_1)+\alpha(v_2)$ for all $v_1,v_2\in V$, and there is an automorphism $\theta\in\text{Aut}(K)$ such that $\alpha(\lambda v)=\theta(\lambda)\alpha(v)$ for all $\lambda\in K$ and $v\in V$. 
\end{definition}

\begin{remark}
Some fields like $\mathbb{Q}$ or $\mathbb{R}$ have no nontrivial automorphism, and over such fields any semilinear map is linear. 
\end{remark}

\begin{definition}
A Lie algebra $(\ggg,+,[,])$ over a field $K$ is called a {\it unique addition Lie algebra}, or a {\it UA-Lie algebra}, if for any Lie algebra $(\sss,\oplus,\lceil,\rceil)$ over the same field $K$ any bijection $\alpha\colon\ggg\to\sss$ preserving commutators is a semilinear map. 
\end{definition} 

We also have a weaker version of the unique addition property. 

\begin{definition}
\label{dbdb}
A Lie algebra $\ggg$ is a {\it Lie algebra with weak unique addition}, or a {\it wUA-Lie algebra}, if any bijection $\alpha\colon\ggg\to\ggg$ preserving commutators is a semilinear map.  
\end{definition} 

\begin{remark}
We have no example of two Lie algebras $\ggg_1$ and $\ggg_2$ over a field $K$ such that there exists a semilinear bijection $\ggg_1\to\ggg_2$ preserving
commutators, but the Lie algebras $\ggg_1$ and $\ggg_2$ are not isomorphic. 
\end{remark}

Let us denote by $\sss_2(K)$ the 2-dimensional non-commutative Lie algebra over $K$. The following result is proved in~\cite{A1,A2}.

\begin{proposition}
Let $K$ be a field of characteristic different from $2$. Then the Lie algebras $\sss_2(K)$ and $\sl_2(K)$ are wUA-Lie algebras.  
\end{proposition}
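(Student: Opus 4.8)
The plan is to prove that $\sss_2(K)$ and $\sl_2(K)$ are wUA-Lie algebras by analyzing the commutator structure directly and exploiting the fact that both algebras are very small and satisfy the C-condition, so that Lemma~\ref{TL} gives additivity on a large set of pairs. Let me sketch the approach for each algebra in turn, since their commutator geometries differ substantially.

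\textbf{The case of $\sl_2(K)$.} Here the algebra is 3-dimensional and perfect, with trivial center. First I would fix a basis $e,f,h$ with $[h,e]=2e$, $[h,f]=-2f$, $[e,f]=h$. Since $\sl_2(K)$ satisfies the C-condition (it is $2$-generated with trivial center), for a commutator-preserving bijection $\alpha\colon\sl_2(K)\to\sl_2(K)$ the proof of Theorem~\ref{TCC} already yields \emph{additivity}: $\alpha(a+b)=\alpha(a)+\alpha(b)$ for all $a,b$. Thus the genuinely new content over characteristic $\neq 2$ is to upgrade additivity to \emph{semilinearity}, i.e. to produce a field automorphism $\theta$ with $\alpha(\lambda x)=\theta(\lambda)\alpha(x)$. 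The key observation is that additivity already forces $\alpha$ to be $\ZZ$-linear (hence $\QQ$-linear in characteristic zero, or $\FF_p$-linear otherwise), so one only needs to understand how $\alpha$ interacts with scalar multiplication by a single scale parameter. I would extract $\theta$ from the one-dimensional Cartan-type data: the centralizer $C(h)=\langle h\rangle$ is a line, $\alpha$ permutes such lines, and the commutator relation $[x,[x,y]]$ combined with additivity pins down a multiplicative and additive map $K\to K$, which is exactly a field endomorphism; bijectivity makes it an automorphism. The step where characteristic $\neq 2$ enters is in recovering the scalar action from commutators $[h,e]=2e$, where one must divide by $2$ to isolate eigenvalue information.

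\textbf{The case of $\sss_2(K)$.} This algebra has basis $u,v$ with $[u,v]=v$, so $[\sss_2,\sss_2]=\langle v\rangle$ is a $1$-dimensional ideal and the center is trivial, but the algebra is solvable rather than perfect and is only $2$-dimensional, so Theorem~\ref{TCC} does not directly apply in the same clean way. I would argue instead by hand: a commutator-preserving bijection $\alpha$ must preserve the derived subalgebra $[\sss_2,\sss_2]=\langle v\rangle$ and must send the set of elements $w$ with $[w,\cdot]$ of rank one to itself, which constrains $\alpha$ to preserve the affine structure of the complement of $\langle v\rangle$. The commutator relation $[au+bv, a'u+b'v]=(ab'-a'b)v$ shows that $\alpha$ restricted to $\langle v\rangle$ is multiplicative in a suitable sense and that $\alpha$ on the $u$-coordinate behaves like a field map; semilinearity then follows by matching the bilinear form $(a,b;a',b')\mapsto ab'-a'b$ before and after applying $\alpha$.

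\textbf{Main obstacle.} The hard part will be the passage from additivity to semilinearity, that is, showing the induced map on scalars is genuinely a field \emph{automorphism} and that a \emph{single} $\theta$ works uniformly across the whole algebra (not a different scalar twist on different root lines or coordinate axes). Concretely, one must verify that the field map $\theta$ read off from the action on $\langle e\rangle$ agrees with the one read off from $\langle f\rangle$ and from $\langle h\rangle$ in the $\sl_2$ case, and this compatibility is forced by the rigid commutator relations $[e,f]=h$, $[h,e]=2e$; reconciling these is where the characteristic $\neq 2$ hypothesis does its real work, since in characteristic $2$ the eigenvalue $2$ collapses and the root-line bookkeeping degenerates. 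I expect the $\sl_2$ case to require the more delicate argument, while $\sss_2$ is essentially a direct computation once the derived-subalgebra preservation is established.
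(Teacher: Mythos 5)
The paper does not actually prove this proposition: it is quoted from the earlier papers \cite{A1,A2}, where the result is established by direct computation with the commutator structure of $\sss_2(K)$ and $\sl_2(K)$ over an arbitrary field of characteristic $\ne 2$. So your proposal cannot be matched against an in-paper argument; judged on its own, it is a plan with two genuine gaps. First, your additivity step for $\sl_2(K)$ invokes Theorem~\ref{TCC}, but that theorem is only proved for \emph{infinite} fields (its proof is a Zariski-density argument), whereas the proposition is stated for every field of characteristic $\ne 2$, including $\FF_3$, $\FF_5$, and so on. For finite $K$, Lemma~\ref{TL} gives additivity only for the pairs $(a,b)$ with $C(a)\cap C(b)=\{0\}$, and the extension to arbitrary pairs must be done by hand; you do not address this. (Incidentally, your remark that Theorem~\ref{TCC} ``does not directly apply'' to $\sss_2(K)$ is backwards: with the basis $[u,v]=v$ one has $C(u)=Ku$, $C(v)=Kv$, so $\sss_2(K)$ satisfies the C-condition just as cleanly as $\sl_2(K)$ --- the paper says so explicitly two sentences after the proposition; the real obstruction for both algebras is the possible finiteness of $K$, not solvability or dimension.)

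Second, the passage from additivity to semilinearity, which you yourself flag as the main obstacle, is left unresolved rather than carried out. To complete it one must (i) show that an additive commutator-preserving self-bijection maps the lines $Ke$, $Kf$, $Kh$ to lines, e.g.\ by showing that $\alpha(h)$ again has one-dimensional $\ad$-eigenspaces and using $[\alpha(h),\alpha(\lambda e)]=2\alpha(\lambda e)$; (ii) define maps $\theta_e,\theta_f,\theta_h\colon K\to K$ by $\alpha(\lambda x)=\theta_x(\lambda)\alpha(x)$ and deduce from $[e,f]=h$ and $[h,e]=2e$ that they coincide and are multiplicative as well as additive; and (iii) prove that the resulting field endomorphism is surjective, for instance by running the same argument for $\alpha^{-1}$. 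None of these steps is executed, and step (i) is precisely where $\mathrm{char}\,K\ne 2$ genuinely enters (in characteristic $2$ the element $h$ becomes central and the eigenline bookkeeping collapses). The analogous verification for $\sss_2(K)$ is likewise only gestured at. As written, the proposal correctly identifies the skeleton of a proof but does not constitute one.
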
 

Let us assume that $K$ has characteristic zero. Clearly, the Lie algebras $\sss_2(K)$ and $\sl_2(K)$ satisfy the C-condition, so they are UA-Lie rings. Over the field $\QQ$ of rational numbers any additive map between vector spaces is linear, so we conclude that $\sss_2(\QQ)$ and $\sl_2(\QQ)$, as well as any other Lie algebra over $\QQ$ that is a UA-Lie ring, are UA-Lie algebras. 

\smallskip

Let us give one more observation on this subject.

\begin{lemma}
Let $K$ be an algebraically closed field of characteristic zero. Assume that $\sss$ is a finite-dimensional Lie algebra over $K$. Then any bijection $\alpha\colon\sl_2(K)\to\sss$ preserving commutators is a semilinear map. 
\end{lemma}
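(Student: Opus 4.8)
The plan is to bootstrap from the C-condition machinery: first make $\alpha$ additive, and then recover a field automorphism from the way $\alpha$ acts on scalar multiples along the three root lines of $\sl_2(K)$.

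First I would observe that $\sl_2(K)$ satisfies the C-condition, so by Theorem~\ref{TCC} the map $\alpha$ is additive, and hence $\QQ$-linear (since $\alpha(nx)=n\alpha(x)$ for $n\in\ZZ$). Fix the standard basis $e,h,f$ with $[h,e]=2e$, $[h,f]=-2f$, $[e,f]=h$, and set $E=\alpha(e)$, $H=\alpha(h)$, $F=\alpha(f)$; since $\alpha$ is $\QQ$-linear and commutator-preserving, these satisfy the same $\sl_2$-relations in $\sss$. The preliminary point is to identify the integer eigenspaces of $\ad(H)$. Writing $\sss_2=\ker(\ad(H)-2)$, $\sss_0=\ker\ad(H)$ and $\sss_{-2}=\ker(\ad(H)+2)$, the identity $2\alpha(x)=\alpha(2x)$ (valid because $2\in\ZZ$, so ambient doubling agrees with $\alpha$-doubling) together with injectivity of $\alpha$ gives $\sss_2=\alpha(Ke)$, $\sss_0=\alpha(Kh)$, $\sss_{-2}=\alpha(Kf)$. (Only these integer eigenvalues can be handled this way, but they are all I need.) These are finite-dimensional $K$-subspaces, and $\alpha$ restricts to additive bijections $g\colon K\to\sss_2$, $q\colon K\to\sss_0$, $p\colon K\to\sss_{-2}$, where $g(\lambda)=\alpha(\lambda e)$ etc., with $g(1)=E$, $q(1)=H$, $p(1)=F$.

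The heart of the proof, and the step I expect to be the main obstacle, is to show that each of $\sss_2,\sss_0,\sss_{-2}$ is one-dimensional over $K$. Here I would play the $\QQ$-linear coordinates $g,q,p$ against the genuinely $K$-bilinear bracket of $\sss$. The single relation $[g(\lambda),p(\mu)]=\alpha([\lambda e,\mu f])=q(\lambda\mu)$ is the input. For $c\in K$ let $m_c,r_c,n_c\colon K\to K$ be the additive bijections recording ambient multiplication by $c$ in the coordinates $g,q,p$ (so $c\,g(\lambda)=g(m_c\lambda)$, and similarly for $r_c,n_c$). Pushing $c$ into the first bracket factor, into the second, and onto the output, and cancelling the injective $q$, yields $(m_c\lambda)\mu=\lambda(n_c\mu)=r_c(\lambda\mu)$ for all $\lambda,\mu$; specialising $\mu=1$ and $\lambda=1$ forces $m_c=n_c=r_c$ and then shows this operator is multiplication by $t_c:=m_c(1)$. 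The map $c\mapsto t_c$ is then a ring homomorphism $K\to K$, hence a field isomorphism onto a subfield $K'\subseteq K$, and the ambient $K$-action on $\sss_2$ is, in the coordinate $g$, multiplication by $t_c$; consequently $\dim_K\sss_2=[K:K']$, and likewise for $\sss_0$ and $\sss_{-2}$. At this point both hypotheses of the lemma become essential: finite-dimensionality of $\sss$ gives $[K:K']<\infty$, while $K'\cong K$ is algebraically closed and so has no proper finite extension. Therefore $K'=K$ and $\dim_K\sss_2=\dim_K\sss_0=\dim_K\sss_{-2}=1$.

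Finally I would assemble semilinearity. One-dimensionality produces additive maps $\theta_e,\theta_h,\theta_f\colon K\to K$ fixing $1$ with $\alpha(\lambda e)=\theta_e(\lambda)E$, $\alpha(\lambda h)=\theta_h(\lambda)H$, $\alpha(\lambda f)=\theta_f(\lambda)F$. Comparing scalars in the commutator identities coming from $[\lambda h,\mu e]=2\lambda\mu e$, $[\lambda h,\mu f]=-2\lambda\mu f$ and $[\lambda e,\mu f]=\lambda\mu h$ gives $\theta_e(\lambda\mu)=\theta_h(\lambda)\theta_e(\mu)$, $\theta_f(\lambda\mu)=\theta_h(\lambda)\theta_f(\mu)$ and $\theta_h(\lambda\mu)=\theta_e(\lambda)\theta_f(\mu)$; setting $\mu=1$ collapses these to $\theta_e=\theta_h=\theta_f=:\theta$, so $\theta$ is additive and multiplicative with $\theta(1)=1$, i.e. a field endomorphism, and it is bijective since $\alpha|_{Ke}$ is, hence $\theta\in\Aut(K)$. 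For a general $x=ae+bh+cf$, additivity then gives $\alpha(\lambda x)=\theta(\lambda a)E+\theta(\lambda b)H+\theta(\lambda c)F=\theta(\lambda)(\theta(a)E+\theta(b)H+\theta(c)F)=\theta(\lambda)\alpha(x)$, so $\alpha$ is semilinear with respect to $\theta$, completing the proof.
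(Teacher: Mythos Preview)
Your argument is correct, but it takes a quite different route from the paper's. The paper observes that a commutator-preserving bijection transports commutator-expressible properties, so $\sss$ is non-solvable with commutative centralizers of nonzero elements; it then quotes \cite[Proposition~2]{AMP} to conclude $\sss\cong\sl_2(K)$, and finishes by invoking the previously cited wUA-property of $\sl_2(K)$ from \cite{A1,A2}. In other words, the paper reduces the lemma to two external results. Your proof is instead a direct, self-contained construction (modulo Theorem~\ref{TCC}): you use the C-condition to get additivity, then exploit the $K$-bilinearity of the bracket in $\sss$ against the merely $\QQ$-linear transport $g,q,p$ to produce a field embedding $c\mapsto t_c$ of $K$ into itself, and invoke algebraic closedness plus finite-dimensionality of $\sss$ exactly once, to force $[K:K']=1$. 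This has the advantage of making transparent where each hypothesis is used and of exhibiting the automorphism $\theta$ explicitly; as a by-product it also reproves $\sss\cong\sl_2(K)$ without \cite{AMP}. The paper's approach is much shorter on the page but is a black-box reduction; yours is longer but more informative and avoids the external citations.
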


\begin{proof}
The Lie algebra $\sss$ inherits properties of $\sl_2(K)$ that are expressible in terms of commutators. For instance, it is not solvable and the centralizer of any non-zero element in $\sss$ is commutative. By~\cite[Proposition~2]{AMP}, these two properties imply that $\sss$ is isomorphic to~$\sl_2(K)$. So, the UA-property for $\sl_2(K)$ follows from the wUA-property.    
\end{proof} 

Summarizing this discussion, it is worth noting that very few examples of UA-Lie algebra are known, and building such examples seems to be an important and interesting task.

\smallskip

Finally, let us mention one more fact related to this subject. In~\cite{Pon2}, Ponomarev proved the following result.
 
\begin{theorem}
Let $K$ be an algebraically closed field of characteristic zero. Then a parabolic subalgebra $\ppp$ in a semisimple Lie algebra $\ggg$ over $K$ is a wUA-Lie algebra if and only if the Lie algebra 
$\ggg$ is simple.
\end{theorem}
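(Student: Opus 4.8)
The plan is to combine the fact that $\ppp$ is already a UA-Lie ring with a commutant analysis inside $\mathrm{End}_\QQ(\ppp)$, reading the required field automorphism off the way conjugation by the bijection acts on the scalar operators.

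First I would record the reduction. Since $\ppp$ satisfies the C-condition (Proposition~\ref{PCC}) and $K$ is infinite, Theorem~\ref{TCC} shows that $\ppp$ is a UA-Lie ring; hence every commutator-preserving bijection $F\colon\ppp\to\ppp$ is automatically additive, and being additive over a field of characteristic zero it is $\QQ$-linear, i.e. $F\in\GL_\QQ(\ppp)$. Thus $\ppp$ is a wUA-Lie algebra if and only if every such $F$ is semilinear. The key observation is that $F$ is semilinear, with associated automorphism $\theta\in\Aut(K)$, precisely when the conjugation $c_F(A):=FAF^{-1}$ sends each scalar operator $\lambda\cdot\mathrm{id}$ to $\theta(\lambda)\cdot\mathrm{id}$; equivalently, $F$ is semilinear iff $c_F(K\cdot\mathrm{id})\subseteq K\cdot\mathrm{id}$, since $c_F$ is an automorphism of the $\QQ$-algebra $\mathrm{End}_\QQ(\ppp)$ and would then restrict to a field automorphism of $K\cdot\mathrm{id}\cong K$.

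Next I would bring in the $\ad$-representation. Because $\ppp$ has trivial center, $\ad\colon\ppp\to\mathrm{End}_K(\ppp)$ is injective, and the commutator-preserving property reads $F\,\ad_x\,F^{-1}=\ad_{F(x)}$, so $c_F$ preserves the set $\ad(\ppp)$. Consequently $c_F$ preserves the $\QQ$-centralizer $C:=\{T\in\mathrm{End}_\QQ(\ppp)\mid T\,\ad_x=\ad_x\,T\ \forall x\in\ppp\}$. Since every scalar operator lies in $C$, the whole problem reduces to computing $C$: if $C=K\cdot\mathrm{id}$, then automatically $c_F(K\cdot\mathrm{id})\subseteq C=K\cdot\mathrm{id}$ and $F$ is semilinear, whereas if $C$ is strictly larger there is room for $c_F$ to move scalars outside $K\cdot\mathrm{id}$. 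For the ``if'' part I would compute $C$ using the root decomposition of $\ppp$ relative to a Cartan $\hhh\subseteq\bbb\subseteq\ppp$ and a Chevalley basis, whose structure constants $N_{\alpha\beta}$ are rational. A $\QQ$-linear $T$ commuting with $\ad\hhh$ must preserve $\hhh$ and each root line $\ggg_\alpha$ and act on $\ggg_\alpha$ as a $K$-scalar $c_\alpha$; commuting with the root vectors $\ad_{e_\gamma}$ then forces $c_{\beta+\gamma}=c_\beta$ whenever $N_{\gamma\beta}\neq0$, and forces $T|_\hhh$ to be the common scalar once the roots span $\hhh^*$ (which holds for any parabolic). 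Here simplicity of $\ggg$ is exactly connectedness of the root system under addition, so all $c_\alpha$ coincide and $C=K\cdot\mathrm{id}$. I expect the main obstacle to be this computation, and specifically the delicate sub-point that a merely $\QQ$-linear operator commuting with $\ad\hhh$ is already a $K$-scalar on each one-dimensional root space: since $F$ is only $\QQ$-linear the $K$-eigenspaces do not transfer formally, and one must argue directly from the identity $T(\alpha(h)v)=\sum_\beta\beta(h)w_\beta$ (with $Tv=\sum_\beta w_\beta$), using that $\alpha$ is surjective onto $K$ and that the only roots proportional to $\alpha$ are $\pm\alpha$.

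Finally, for the ``only if'' part I would show that when $\ggg$ is not simple the root system is disconnected and $C\supsetneq K\cdot\mathrm{id}$, and I would make the failure explicit. Writing $\ggg=\ggg_1\oplus\cdots\oplus\ggg_m$ with $m\ge2$ as a sum of simple ideals and $\ppp=\ppp_1\oplus\cdots\oplus\ppp_m$ correspondingly, I would fix a nontrivial $\theta\in\Aut(K)$ (available since $K$ is algebraically closed of characteristic zero) and define $F$ to be the $\theta$-twist $\sum\lambda_jv_j\mapsto\sum\theta(\lambda_j)v_j$ in a Chevalley basis of $\ppp_1$ and the identity on $\ppp_2\oplus\cdots\oplus\ppp_m$. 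Rationality of the structure constants makes each summand commutator-preserving, so $F$ is an additive commutator-preserving bijection; but it scales by $\theta$ on $\ppp_1$ and by $\mathrm{id}$ on $\ppp_2$, hence cannot be semilinear, and $\ppp$ fails to be a wUA-Lie algebra.
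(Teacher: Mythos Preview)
Your plan is correct, and the delicate point you flag does go through: from $T(\alpha(h)v)=\sum_\beta\beta(h)w_\beta$ one first takes $h\in\Ker\alpha$ to force $\beta|_{\Ker\alpha}=0$ whenever $w_\beta\ne0$, so $\beta\in\{\pm\alpha\}$; choosing $h$ with $\alpha(h)=1$ then kills $w_0$ and $w_{-\alpha}$, and the same identity gives $T(\lambda v)=\lambda T(v)$, so $T$ really is a $K$-scalar $c_\alpha$ on each root line. The connectedness argument for a parabolic in a simple $\ggg$ is handled by height induction on the positive roots along the (connected) Dynkin diagram together with $c_\alpha=c_{-\alpha}$ from $[e_\alpha,e_{-\alpha}]=h_\alpha$. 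Your ``only if'' construction is also fine: an algebraically closed field of characteristic zero always has a nontrivial automorphism, and rationality of the Chevalley structure constants makes the $\theta$-twist on $\ppp_1$ commutator-preserving.

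As for comparison with the paper: the paper does not prove this theorem itself but attributes it to Ponomarev, noting only that the proof uses Theorem~\ref{tmain} together with the description, obtained in~\cite{Pon1}, of all Lie-ring automorphisms of regular subalgebras of reductive Lie algebras. Your argument shares the first step (reduce to additive, hence $\QQ$-linear, bijections) but then replaces the appeal to that external classification by the self-contained centralizer trick: $c_F$ preserves the $\QQ$-commutant $C$ of $\ad(\ppp)$ in $\mathrm{End}_\QQ(\ppp)$, and you compute $C$ directly. This is more elementary and tailored to the statement at hand, whereas the route via~\cite{Pon1} yields more information---an explicit description of the full Lie-ring automorphism group of $\ppp$---from which the wUA dichotomy is read off as a corollary.
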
 

The proof uses Theorem~\ref{tmain} and is based on a description of automorphisms of regular subalgebras of reductive Lie algebras as Lie rings obtained in~\cite{Pon1}. 

\section{Commutator-preserving injections} 
\label{cpi}

One may weaken Definition~\ref{deflie} and assume that the map $\alpha$ is not a bijection, but just an injection. This weakening changes the situation significantly. 

\begin{lemma}
\label{pol}
Assume that the field $K$ contains at least $3$ elements and $\ggg$ is a Lie algebra over $K$ with $\ggg\ne [\ggg,\ggg]$. Then there is a Lie algebra $\sss$ over $K$ and an injective
map $\beta\colon\ggg\to\sss$ that preserves commutators, but is not additive. 
\end{lemma}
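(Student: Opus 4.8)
The plan is to build the target Lie algebra $\sss$ by enlarging $\ggg$ in the directions where $\ggg$ fails to be perfect, and to define the injection $\beta$ so that it acts nontrivially only on a coset modulo $[\ggg,\ggg]$. The key structural fact to exploit is that any element $a\notin[\ggg,\ggg]$ and any element $z$ in the center $Z(\ggg)$ (or, more robustly, in an abelian ideal) can be shuffled without affecting commutators, exactly as in the transpositions used in Lemma~\ref{lemnon} and Proposition~\ref{neg}. The difference here is that, since $\beta$ need not be surjective, I have the freedom to split one coset of $[\ggg,\ggg]$ into two disjoint cosets inside a larger algebra.

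First I would fix the setup. Since $\ggg\ne[\ggg,\ggg]$, choose $a\in\ggg\setminus[\ggg,\ggg]$ and pick a complement so that $\ggg=[\ggg,\ggg]\oplus\langle a\rangle\oplus\ccc$ as vector spaces for some subspace $\ccc$. The abelianization $\ggg/[\ggg,\ggg]$ is a nonzero abelian Lie algebra of dimension at least one. The idea is to define $\sss=\ggg\oplus\langle w\rangle$, where $w$ is a new basis vector, and to equip $\sss$ with a bracket that extends that of $\ggg$ and places $w$ in the center: $\lceil w,s\rceil=0$ for all $s\in\sss$. Then $\sss$ is again a Lie algebra (the Jacobi identity is immediate since $w$ is central), and $[\sss,\sss]=[\ggg,\ggg]$ sits inside $\sss$ unchanged.

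Next I would write down $\beta$ explicitly. Using that $|K|\ge 3$, fix a scalar $\lambda\in K\setminus\{0,1\}$ and define
$$
\beta(x)=x \quad\text{for } x\in\ggg \text{ with } x\notin\{a,\,\lambda a\},
$$
while sending $\beta(a)=a+w$ and $\beta(\lambda a)=\lambda a$. More cleanly, I would realize the map as the identity on $\ggg$ composed with a single ``defect'' on the coset $a+[\ggg,\ggg]$: the point is to send one representative off into the $w$-direction so that additivity along the scalar line through $a$ fails. Because $a,\lambda a\notin[\ggg,\ggg]$ and $w$ is central, for every $s\in\ggg$ we have $\lceil\beta(a),\beta(s)\rceil=\lceil a+w,\beta(s)\rceil=[a,s]=\beta([a,s])$, and likewise on all other pairs; so $\beta$ preserves commutators. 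Injectivity is clear since $\beta$ is injective on $\ggg$ and only relocates finitely many points into a fresh direction. Finally, non-additivity follows because $\beta(\lambda a)=\lambda a\ne \lambda a+\lambda w=\lambda\,\beta(a)$ would be the scalar test, or additively $\beta(a)+\beta(a')\ne\beta(a+a')$ for a suitable $a'\in[\ggg,\ggg]$ chosen exactly as in the proof of Lemma~\ref{lemnon}.

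I expect the main obstacle to be the bookkeeping that guarantees commutators are genuinely preserved on \emph{all} pairs, not just those involving $a$: one must verify that moving $a$ to $a+w$ never changes a bracket, which forces $w$ to be central and forces the relocated elements to lie outside $[\ggg,\ggg]$ so that their brackets with the rest of $\ggg$ are unaffected. The hypothesis $|K|\ge 3$ is what supplies a scalar $\lambda\ne 0,1$ (equivalently, a third element in the coset line) needed to produce a genuine failure of additivity in the style of Lemma~\ref{lemnon}; over $\FF_2$ the coset $a+[\ggg,\ggg]$ may be too small to detect non-additivity, which is precisely why the hypothesis appears.
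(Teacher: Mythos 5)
Your construction is correct and is essentially the paper's own proof: both adjoin a central line to form $\sss=\ggg\oplus\langle z\rangle$ and set $\beta(x)=x+f(x)z$ for a non-additive function $f\colon\ggg\to K$ vanishing on $[\ggg,\ggg]$ (the paper takes $f(x)=\gamma(a_1)$ for a basis coordinate $a_1$ transverse to $[\ggg,\ggg]$ and a non-additive $\gamma\colon K\to K$, while you take $f$ supported on the single point $a\notin[\ggg,\ggg]$), with commutator-preservation in both cases coming from $f$ killing $[\ggg,\ggg]$ and $z$ being central. The only slip is your phrase ``for a suitable $a'\in[\ggg,\ggg]$'', which breaks down when $\ggg$ is abelian; taking instead $a'=\lambda a$ with $\lambda\ne 0,1$ (which you already introduced, and which is where $|K|\ge 3$ enters your version) repairs it at once.
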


\begin{proof}
Let us fix a non-additive map $\gamma\colon K\to K$ with $\gamma(0)=0$. For example, we may let $\gamma(a)=1$ for any non-zero $a\in K$. 

Take a basis in $[\ggg,\ggg]$ and complement it to a basis $\{x_i\}$ in $\ggg$. We assume that $x_1\notin [\ggg,\ggg]$. Let the Lie algebra $\sss$ be the direct sum of Lie algebras
$\ggg\oplus\langle z\rangle$, so $z$ is a central element. Define 
$$
\beta\colon\ggg\to\sss, \quad \beta(x)=x+\gamma(a_1)z, \quad \text{where} \quad x=\sum_i a_ix_i, \quad a_i\in K.
$$
Then we have
$$
\beta([x,y])=[x,y]+\gamma(0)z=([x,y],0)=[\beta(x),\beta(y)] \quad \text{for all} \quad x,y\in\ggg.
$$
So the map $\beta$ preserves commutators, but it is not additive. 
\end{proof}

Since the Lie algebra $\sss_2(K)$ satisfies the C-condition, it is a UA-Lie ring for any infinite field $K$. 
At the same time, Lemma~\ref{pol} shows that a commutator-preserving injection need not be additive in this case. 

\smallskip

In view of the above, let us formulate the following conjecture. 

\begin{conjecture}
Let $\beta\colon\ggg\to\SSS$ be a commutator-preserving injection from a semisimple Lie algebra $\ggg$ to a Lie ring $\SSS$. Then the map $\beta$ is additive.  
\end{conjecture}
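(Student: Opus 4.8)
The plan is to leverage the same commutator-rigidity that makes a semisimple Lie algebra a UA-Lie ring, but to compensate for the loss of surjectivity. The key observation is that Lemma~\ref{TL} never actually uses surjectivity of $\alpha$: its proof only manipulates images of elements and the relation $\alpha([a,b])=\lceil\alpha(a),\alpha(b)\rceil$. So if $\beta\colon\ggg\to\SSS$ is an injective commutator-preserving map and $a,b\in\ggg$ satisfy $C(a)\cap C(b)=\{0\}$, I would first check that the argument of Lemma~\ref{TL} still yields $\beta(a+b)=\beta(a)\oplus\beta(b)$. The subtle point is that in Lemma~\ref{TL} one writes $\beta(a+b)=\beta(a)\oplus\beta(b)\oplus\beta(c)$ for some $c\in\ggg$; with $\beta$ merely injective, such an element $c$ with $\beta(c)$ equal to the required difference need not exist inside $\Im\beta$. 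The cleanest way around this is to work directly with the element $w:=\beta(a+b)\ominus\beta(a)\ominus\beta(b)\in\SSS$ (using the group structure of $(\SSS,\oplus)$) and to show $w=0$ by proving $\lceil w,\beta(x)\rceil=0$ for enough $x$, without ever needing $w$ to lie in the image of $\beta$.

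Concretely, I would run the computation
\begin{align*}
\lceil\beta(a+b),\beta(b)\rceil
&=\beta([a+b,b])=\beta([a,b])=\lceil\beta(a),\beta(b)\rceil,\\
\lceil\beta(a+b),\beta(a)\rceil
&=\beta([a+b,a])=\beta([b,a])=\lceil\beta(b),\beta(a)\rceil,
\end{align*}
which, upon substituting $\beta(a+b)=\beta(a)\oplus\beta(b)\oplus w$ and expanding by bilinearity of $\lceil,\rceil$, forces $\lceil w,\beta(a)\rceil=\lceil w,\beta(b)\rceil=0$. This shows $w$ commutes with $\beta(a)$ and $\beta(b)$, but it does \emph{not} immediately show $w=0$, because the conclusion $c\in C(a)\cap C(b)$ in Lemma~\ref{TL} relied on $w=\beta(c)$. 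This is precisely the gap created by dropping surjectivity, and closing it is the crux of the argument.

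To close the gap I would exploit that a semisimple $\ggg$ satisfies the C-condition on a \emph{large} (Zariski dense) set of pairs, as recorded in Lemmas~\ref{lemmel} and the proof of Theorem~\ref{TCC}: the set $C(\ggg)$ of elements admitting a trivial-intersection partner is open and dense. The idea is to show $\lceil w,\beta(x)\rceil=0$ for \emph{all} $x\in\ggg$ by varying $a,b$ over the dense set while holding a target element fixed, so that $w$ ends up commuting (in $\SSS$) with every element of the subring generated by $\Im\beta$. Since $\ggg$ is generated by two elements and $\Im\beta$ is a commutator-closed copy of the bracket structure of $\ggg$, the centralizer of $\Im\beta$ inside the Lie subring it generates should be forced to act trivially on the relevant differences, pinning down $w$. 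Once additivity holds on all pairs $(a,b)$ with $C(a)\cap C(b)=\{0\}$, the three-case splitting-off argument of Theorem~\ref{TCC} (Cases~1--3), which again uses only images and density, upgrades this to full additivity of $\beta$.

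\textbf{Main obstacle.} The hard part will be the last paragraph: transferring the vanishing of a ``defect'' element $w\in\SSS$ from ``commutes with $\beta(a),\beta(b)$'' to ``is zero,'' without surjectivity to identify $w$ with some $\beta(c)$. In the bijective case the center of $\ggg$ being trivial did this instantly; here one must instead show that the target Lie ring cannot harbor a nonzero element centralizing the entire image of a two-generated semisimple algebra in a way compatible with all the additivity defects simultaneously. Making this rigorous — presumably by showing the defects $w$ form a consistent additive $1$-cocycle that is forced to be trivial by the perfectness $[\ggg,\ggg]=\ggg$ and the C-condition — is exactly where the rigidity of the commutator structure of semisimple Lie algebras, invoked in the statement, must be brought to bear, and is why this remains a conjecture rather than a theorem.
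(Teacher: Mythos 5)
The statement you are addressing is posed in the paper as an open conjecture; the paper offers no proof and explicitly says that at present there are no positive results on commutator-preserving injections. So the only question is whether your proposal actually settles the problem, and it does not --- as you yourself concede in your final paragraph. The crux is exactly where you locate it: the defect $w=\beta(a+b)\ominus\beta(a)\ominus\beta(b)$ lives in $\SSS$, not in $\Im\beta$, so establishing $\lceil w,\beta(a)\rceil=\lceil w,\beta(b)\rceil=0$ gives no purchase, because the C-condition and the triviality of $Z(\ggg)$ are statements about $\ggg$ and say nothing about centralizers of $\Im\beta$ inside an arbitrary Lie ring $\SSS$. Note moreover that the difficulty is not confined to the analogue of Lemma~\ref{TL}: Cases 1--3 of the proof of Theorem~\ref{TCC} also repeatedly write differences such as $\alpha(a)\oplus\alpha(b)\ominus\alpha(a+b)$ as $\alpha(d)$ for some $d\in\ggg$ and then reason about $d$ inside $\ggg$ (e.g.\ concluding $d\in Z(\ggg)$ from $d$ commuting with a dense set of $c$). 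Every one of those steps uses surjectivity to pull the defect back into $\ggg$, so your claim that the three-case argument ``uses only images and density'' is not accurate; the density arguments would have to be rebuilt entirely inside $\SSS$, where no Zariski topology or dimension count is available since $\SSS$ is only assumed to be a Lie ring.

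Your closing suggestion --- that the defects should assemble into an additive $1$-cocycle forced to vanish by perfectness --- points in a sensible direction: Lemma~\ref{pol} shows that $\ggg=[\ggg,\ggg]$ must enter any proof, since for non-perfect $\ggg$ the conjecture fails even for algebras satisfying the C-condition. But as written this is a program, not an argument: the cocycle is not defined, its additivity is not established (that is essentially the statement being proved), and no vanishing mechanism is produced. The proposal correctly diagnoses the obstruction but does not overcome it; the statement remains open.
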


At the moment we have no example of a Lie ring $\RRR$ such that any injective map $\beta\colon\RRR\to\SSS$ to a Lie ring $\SSS$ that preserves commutators is automatically additive. 


\end{document}